\documentclass[preprint,12pt]{elsarticle}



\usepackage{amssymb}
\usepackage{amsmath}
\usepackage{amsthm}
\newtheorem{thm}{Theorem}

\newdefinition{rem}{Remark}
\newdefinition{defi}{Definition}
\newproof{pf}{Proof}

\journal{*****}

\begin{document}

\begin{frontmatter}

\title{Non integrability of the $n$ body problem with non zero angular momentum}
\author{Thierry COMBOT\fnref{label2}}
\ead{combot@imcce.fr}
\address{IMCCE, 77 Avenue Denfert Rochereau 75014 PARIS}

\title{Non integrability of the $n$ body problem with non zero angular momentum}

\author{}

\address{}

\begin{abstract}
We prove an integrability criterion and a partial integrability criterion for homogeneous potentials of degree $-1$ which are invariant by rotation. We then apply it to the proof of the meromorphic non-integrability of the $n$ body problem with Newtonian interaction in the plane on a surface of equation $(H,C)=(H_0,C_0)$ with $(H_0,C_0) \neq (0,0)$ where $C$ is the angular momentum and $H$ the energy, in the case where the $n$ masses are equal.
\end{abstract}

\begin{keyword}
Morales-Ramis theory\sep homogeneous potential \sep central configurations \sep differential Galois theory


\end{keyword}

\end{frontmatter}

\section{Introduction}

Non-integrability of homogeneous potentials have already been a lot studied mainly using Morales Ramis theory and its integrability conditions and Ziglin theory. These methods require a particular algebraic orbit of the corresponding potential. With homogeneous potentials, there exist generically straight line orbits, corresponding to the Darboux points of the potentials.

\begin{defi}\label{def0}
Let $V:U\subset \mathbb{C}^n\longrightarrow \mathbb{C}$ be a meromorphic homogeneous function in $q_1,\dots,q_n$. We say that $c\in\mathbb{C}^n$ is a Darboux point if there exists $\alpha\in\mathbb{C}$ such that
$$\frac{\partial}{\partial q_i} V(c)= \alpha c_i \quad\forall i=1\dots n$$
We call $\alpha$ the multiplier, and we say that $c$ is non degenerated if $\alpha\neq 0$. A Darboux point $c$ is also call a central configuration in the case of the $n$ body problem.
\end{defi}

These Darboux points correspond to homothetic orbits, which are explicit algebraic solutions of the differential equations
$$\ddot{q_i}=\frac{\partial}{\partial q_i} V \;\; i=1\dots n$$
With such orbits, it is already possible to prove some facts about non integrability, in particular in the case of homogeneous potentials.

\begin{thm} \label{thm:Morales}(Morales, Ramis, Yoshida \cite{1},\cite{4},\cite{5},\cite{6}) If a meromorphic potential $V$ is meromorphically integrable, then the neutral component of the Galois group of the variational equation near a particular (algebraic) orbit $\Gamma$ is abelian at all order. Moreover, for $V$ homogeneous of degree $-1$ and $\Gamma$ a homothetic orbit associated to a Darboux point $c$ with multiplier $-1$, the Galois group of the first order variational equation has an abelian neutral component if and only if
$$\hbox{Sp}\left(\nabla^2 V(c)\right)\subset \left\lbrace\textstyle\frac{1}{2}(k-1)(k+2),\; k\in\mathbb{N} \right\rbrace.$$
\end{thm}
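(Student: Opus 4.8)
The plan is to treat the two assertions separately: the first is the general Morales--Ramis obstruction, which I would invoke in the form proved in \cite{1},\cite{5}, and the second is an explicit reduction to a hypergeometric equation followed by Kimura's classification. For the first assertion the strategy is the standard differential Galois argument. If $V$ is meromorphically integrable, then the Hamiltonian system possesses $n$ functionally independent first integrals in involution; restricting these integrals and their successive variations along $\Gamma$ produces, in the Lie algebra of the Galois group of each variational equation $\mathrm{VE}_k$, a family of commuting elements large enough to force the identity (neutral) component to be abelian. I would use this as a black box, since it is exactly the content of the Morales--Ramis(--Sim\'o) theorem; the delicate point there, namely the passage from ``meromorphic first integrals in involution'' to ``commuting elements of the Galois Lie algebra'', rests on the symplectic structure of the variational system and is already established in the cited references.

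The substance of what I would actually compute is the second assertion. First I would build the homothetic orbit. Since $c$ is a Darboux point with multiplier $-1$, i.e. $\nabla V(c)=-c$, and $\nabla V$ is homogeneous of degree $-2$, the curve $q(t)=\phi(t)\,c$ solves the equations of motion as soon as $\ddot\phi=-\phi^{-2}$, the one-dimensional Kepler equation, whose phase curve is the algebraic curve $\tfrac12\dot\phi^2-\phi^{-1}=E$. Next I would write the first variational equation along $\Gamma$: using that $\nabla^2 V$ is homogeneous of degree $-3$, it takes the form $\ddot\xi=\phi(t)^{-3}\,\nabla^2 V(c)\,\xi$. As $\nabla^2 V(c)$ is a constant matrix, I would diagonalize it, so that, denoting its eigenvalues $\lambda_1,\dots,\lambda_n=\mathrm{Sp}(\nabla^2 V(c))$, the system decouples into the scalar equations $\ddot\xi_i=\lambda_i\,\phi(t)^{-3}\,\xi_i$.

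Euler's identity for the degree $-1$ potential gives $\nabla^2 V(c)\,c=2c$, so $\lambda=2$ is always an eigenvalue; it corresponds to the tangent/homothetic direction and lies in the admissible set for $k=2$, hence only the remaining (``normal'') eigenvalues carry information. The core step is then to analyze each scalar equation. Following Yoshida, I would change the independent variable from $t$ to a variable $x$ adapted to the energy relation (a M\"obius function of $\phi$ along the Kepler curve), which turns $\ddot\xi_i=\lambda_i\phi^{-3}\xi_i$ into a Gauss hypergeometric equation whose exponents depend only on $\lambda_i$. The identity component of the Galois group of the full variational equation is abelian if and only if that of each hypergeometric factor is, and Kimura's table tells us exactly when the hypergeometric equation has abelian identity component; translating the exponent differences back through the substitution, this happens precisely when $\lambda_i=\tfrac12(k-1)(k+2)$ for some $k\in\mathbb{N}$. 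Assembling the factors yields the stated spectral condition.

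I expect the main obstacle to be bookkeeping in this last step: carrying out the change of variable so that the three singular points and the exponent differences come out correctly, matching Kimura's arithmetic conditions to the quadratic family $\tfrac12(k-1)(k+2)$, and correctly handling the tangential eigenvalue $2$ (equivalently, passing to the normal variational equation) so that the ``if and only if'' is clean. The conceptual input, by contrast, is entirely contained in the cited Morales--Ramis machinery.
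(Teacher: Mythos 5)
The paper offers no proof of this statement: it is quoted verbatim as the Morales--Ramis--Yoshida theorem from the cited references, so there is nothing internal to compare against. Your outline is exactly the standard argument from those references --- the abstract Morales--Ramis obstruction for the first assertion, then the homothetic orbit $q=\phi(t)c$ with $\ddot\phi=-\phi^{-2}$, the variational equation $\ddot\xi=\phi^{-3}\nabla^2V(c)\xi$, Yoshida's change of variable to a hypergeometric equation, and Kimura's table --- and all the computations you indicate (Euler's identity giving the tangential eigenvalue $2$, the exponent arithmetic producing $\lambda=\frac12(k-1)(k+2)$) come out as you say. The one point worth flagging is that your decoupling step silently assumes $\nabla^2V(c)$ is diagonalizable; if it has a nontrivial Jordan block the system does not split into scalar equations and the ``if'' direction of the equivalence can fail, which is why the references (and, strictly, the paper's own statement) should carry that hypothesis or pass to the normal variational equation. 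Modulo that standard caveat, your proposal is correct and coincides with the intended proof.
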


Here we want to study variational equations and their Galois group near another type of particular orbit that we often encounter when the potential is invariant by rotation. In particular, if there exists a plane of Darboux points, invariant by the rotation symmetry (this case in not rare), then we can build particular orbits with non zero angular momentum. Then we get a one parameter family of orbits for which we can apply Morales Ramis theory. For all of them, the identity component of the Galois group of the variational equation should be abelian, and then we can expect a much stronger integrability criterion than \cite{1}. One difficulty is that the variational equation is too difficult to study in the general case, and then we will only make a complete analysis in the case we will call partially decoupled. We find very strong conditions, only two eigenvalues are possible instead of an infinity.\\

\noindent
The main results of this article are the following.\\

We will first prove that if a homogeneous potential of degree $-1$ invariant by rotation is meromorphically integrable on a surface given by a fixed energy and angular momentum, then the eigenvalue $\lambda$ of the Hessian matrix of a Darboux point with multiplier $-1$ should belong to the following table
\begin{center}
\begin{tabular}{|c|c|}\hline
$C$&$\lambda$\\\hline
$C=0$&$\lambda\in\left\lbrace\textstyle\frac{1}{2}(k-1)(k+2),\;k\in\mathbb{N}\right\rbrace$\\\hline
$C^2H=-1/2$&$\lambda\in\left\lbrace-k^2,\;k\in\mathbb{N}\right\rbrace$\\\hline
$H=0$&$\lambda\in\left\lbrace\textstyle\frac{1}{2}(k-1)(k+2),\;k\in\mathbb{N}\right\rbrace$\\\hline
$(C,H)\in\mathbb{C}^2$&$\lambda\in\{0,-1\}$\\\hline
$(C,H)=(0,0)$&$\lambda\in\mathbb{C}$\\\hline
\end{tabular}\\
\end{center}
where $C$ is the fixed angular momentum and $H$ the fixed energy. The complete statement is Theorem~\ref{thm4} with the table of Theorem~\ref{thm5}, in which there is an additional a priori hypothesis, the ``decoupling condition''. Then we will apply this analysis to a well known case, the $n$ body problem in the plane with equal masses.

\begin{thm}\label{thm15}
The $n$ body problem with equal masses is not meromorphically integrable on any hypersurface of the form $C^2H=\alpha$ with $\alpha\neq 0$ fixed, nor on the hypersuface $H=0$, nor on the hypersurface $C=0$ ($H$ is the energy, and $C$ the angular momentum).
\end{thm}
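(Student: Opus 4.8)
The plan is to realize the equal-mass planar $n$-body problem as an instance of the rotation-invariant homogeneous potential treated in Theorem~\ref{thm4}, and then to produce a single central configuration whose Hessian carries an eigenvalue forbidden by the table of Theorem~\ref{thm5} on each of the three families of surfaces. The Newtonian potential $V=\sum_{i<j}|q_i-q_j|^{-1}$ on the plane is homogeneous of degree $-1$ and invariant under the diagonal action of $SO(2)$, so the hypotheses of the general theory are met once a suitable Darboux point is exhibited. The natural candidate for equal masses is the regular $n$-gon configuration $c_k=\zeta^k$, $\zeta=e^{2i\pi/n}$, which is a central configuration; its complex multiples $\mu c$ form a one-parameter family of Darboux points invariant under rotation, so the associated relative equilibrium is an orbit with $C\neq 0$ to which Theorem~\ref{thm4} applies. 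After rescaling I normalize the multiplier of $c$ to $-1$.

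Next I would compute the spectrum of $\nabla^2 V(c)$. Because the configuration is invariant under the cyclic group $\mathbb{Z}_n$ acting simultaneously by permutation of the bodies and rotation of the plane, the Hessian is block-circulant and is diagonalized by the discrete Fourier modes; each mode $m$ contributes a $2\times 2$ block whose two eigenvalues are explicit trigonometric sums over the vertices of the polygon. The overall scaling (homothety), the rotation and the two translations furnish the tangential and symmetry directions, carrying the eigenvalues $2$, $-1$ and $0$; these are factored out and do not enter the transverse spectrum to which the table applies. The content of the argument is to isolate one further mode whose eigenvalue $\lambda_n$ is a nontrivial sum, and this same decomposition is what lets me verify the decoupling hypothesis of Theorem~\ref{thm5}: the discrete symmetry forces the variational equation along the relative equilibrium to split into the blocks indexed by the Fourier modes, which is precisely the partially decoupled situation.

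With $\lambda_n$ in hand the conclusion is arithmetic. It suffices to show that some nontrivial transverse eigenvalue satisfies $\lambda_n\in(-1,2)\setminus\{0\}$, since such a value lies neither in $\{-k^2,\ k\in\mathbb{N}\}$ nor in $\{\tfrac12(k-1)(k+2),\ k\in\mathbb{N}\}$. By Theorem~\ref{thm5} this simultaneously rules out integrability on $C=0$ and on $H=0$ (whose admissible set is $\{\tfrac12(k-1)(k+2)\}$) and on $C^2H=\alpha$: for the special normalization giving $C^2H=-1/2$ the admissible set is $\{-k^2\}$, while for every other nonzero $\alpha$ it is only the smaller set $\{0,-1\}$, both avoided by $\lambda_n$. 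The reduction of a general surface $C^2H=\alpha\neq 0$ to the normalized orbit uses the scaling symmetry of the degree $-1$ potential, under which the Hessian eigenvalue $\lambda_n$ is invariant.

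The main obstacle is the last quantitative step: proving that the chosen trigonometric sum $\lambda_n$ really falls in the gap $(-1,2)\setminus\{0\}$ (or, failing a uniform gap, that it misses both arithmetic progressions) for \emph{every} $n\ge 3$. For small $n$ this is a direct check — for the equilateral triangle the nontrivial eigenvalue is explicit and lies strictly between the forbidden values $0$ and $2$ — but a uniform estimate requires bounding sums of the type $\sum_{k=1}^{n-1}\sin^2(mk\pi/n)/\sin^3(k\pi/n)$ as $n\to\infty$ and controlling them away from the finitely many forbidden integers that lie in their range. A secondary technical point is confirming that the regular $n$-gon satisfies the decoupling condition for all $n$, including the even case where antipodal vertices can create extra degeneracies in the Fourier blocks.
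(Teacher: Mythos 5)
Your strategy coincides with the paper's: take the regular $n$-gon relative equilibrium, use the $\mathbb{Z}_n$ symmetry to extract a transverse eigenvalue of the Hessian, and show that it avoids the admissible sets of Theorem~\ref{thm5} on each of the surfaces $C^2H=\alpha$, $H=0$, $C=0$. But as written the argument has a genuine gap, and you have located it yourself: everything hinges on proving that the chosen eigenvalue misses both progressions $\{-k^2\}$ and $\{\tfrac12(k-1)(k+2)\}$ for \emph{every} $n\ge 3$, and you leave this as an open ``obstacle'', so only the finitely many $n$ you check by hand are actually covered. The paper closes this by committing to one specific Fourier mode, $v_i=\cos(4\pi(i-1)/n)$, $v_{i+n}=\sin(4\pi(i-1)/n)$ (Theorem~\ref{thm14}), whose eigenvalue has the closed form
\begin{equation*}
\lambda=2-\frac{2\sin(\pi/n)}{1-\cos(\pi/n)}\Bigl(\sum_{j=1}^{n-1}\frac{1}{\sin(\pi j/n)}\Bigr)^{-1}.
\end{equation*}
The subtracted term is positive, so $\lambda<2$; and the identity $\sin(\pi/n)/(1-\cos(\pi/n))=\sum_{j=1}^{n-1}\sin(\pi j/n)$ combined with the elementary inequality $\sin z<1/\sin z$ on $(0,\pi)\setminus\{\pi/2\}$ gives $\lambda>0$. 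Since the open interval $(0,2)$ contains no value from any row of the table, the conclusion is uniform in $n$. Some estimate of exactly this kind is indispensable; without it the proof is incomplete.

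A second, smaller issue: you assert that the block-circulant Fourier splitting of $\nabla^2V(c)$ ``is precisely the partially decoupled situation''. It is not quite. Definition~\ref{def4} and Theorem~\ref{thm8} require a vector $v$ with $Wv=J^{-1}WJv=\lambda v$, i.e.\ the $2\times 2$ block on the span of $(v,Jv)$ must be \emph{scalar}, not merely invariant under the rotations $R_\theta$; invariance of the Fourier blocks follows from the circulant structure, but scalarity does not. That the mode above does satisfy this is exactly the content of the trigonometric computation behind Theorem~\ref{thm14}, so this step needs to be verified explicitly (including for even $n$, as you yourself flag).
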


We see that this type of orbit allows us to study a new type of partial integrability: the case where the potential would be integrable only for a fixed value of the energy and angular momentum. This type of potential exists effectively as given in \eqref{eq13}. This integrability table also gives indications on which particular level we should focus (zero angular momentum, zero energy, and the case $C^2H=-1/2$). This is for example helpful to do brutal search using Hietarinta \cite{7} procedure for finding such potentials. The level $C^2H=-1/2$ is also special because for example in the reduced $3$ body problem we have an additional first integral in this level (the Jacobi integral), although this first integral is not valid everywhere on $C^2H=-1/2$. Theorem~\ref{thm4} cannot solve all problems of this kind because of this ``decoupling condition''. A complete analysis of the $3$ body case gives all the masses which satisfy this condition in Theorem~\ref{thm12}, which are not always symmetric. A non integrability theorem like Theorem~\ref{thm15} is by the way immediate for these masses, except for $(m_1,m_2,m_3)=(1,5,1)$.

\begin{defi}
We will call ``norm'' and scalar product the expressions
$$\lVert v \lVert^2 =\sum\limits_{i=1}^n v_i^2 \qquad <v,w> =\sum\limits_{i=1}^n v_iw_i$$
even for complex $v,w$ (In particular, the ``norm'' can vanish for non zero $v$). We will say moreover that a matrix is orthonormal complex if its columns $X_1,\dots,X_n$ are such that
$$<X_i,X_j>=\sum\limits_{k=1}^n (X_i)_k(X_j)_k=0 \quad\forall i,j\qquad \lVert X_i \lVert^2=\sum\limits_{k=1}^n (X_i)_k^2=1 \quad\forall i$$
We note $\mathbb{O}_n$ the complexified orthogonal group which is the group generated by these matrices, and $\mathbb{SO}_n$ the subgroup of $\mathbb{O}_n$ of matrices with determinant $1$ (corresponding to rotations). In particular, the group $\mathbb{O}_n$ conserve the ``norm''.
\end{defi}

\begin{defi}\label{def1}
Let $V$ be a homogeneous meromorphic potential of degree $-1$ in dimension $n\geq 2$. We note
\begin{equation}
G=\left\lbrace g\in \mathbb{O}_n,\;\;V(g.x)=V(x) \;\forall x\in\mathbb{C}^n\right\rbrace
\end{equation}
We will call $G$ the symmetry group of $V$. We will say that $v\in\mathbb{C}^n$ is in the equator of $G$ if
\begin{equation*}
\{\alpha g.v,\; \alpha\in\mathbb{C},\; g\in G\}
\end{equation*}
contains at least a plane $P$ of dimension $2$ and that $v\in P$. We will say that $V$ is invariant by rotation if $G$ contains at least a subgroup isomorphic to $\mathbb{SO}_2$. We will say that $v$ is an eigenvector of $G$ if for all $g\in G$, $v$ is an eigenvector of $g$.
\end{defi}

\begin{thm}\label{thm1}
Let $V$ be a homogeneous potential of degree $-1$ in dimension $n\geq 2$ and $G$ its symmetry group. Suppose there exists $c$ in the equator of $G$ such that $c$ is a Darboux point of $V$ with multiplier $-1$ and ``norm'' $\lVert c\lVert^2\neq 0$. Then the variational equation near the conic orbit with parameters $(C\lVert c\lVert^2,E\lVert c\lVert^2)$ (angular momentum and energy) is given by
\begin{equation}\label{eq6}
t(-C^2+2t+2Et^2)\ddot{X}+(-t+C^2)\dot{X}=R_{\theta(t)}^{-1} \nabla^2 V(c)R_{\theta(t)} X
\end{equation}
where $R_{\theta(t)}\in\mathbb{SO}_2$ with coefficients in $\mathbb{C}\left(\phi,\sqrt{2E-C^2\phi^{-2}+2\phi^{-1}}\right)$
\end{thm}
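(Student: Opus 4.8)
The plan is to realize the conic orbit as a genuine Kepler motion inside the equator plane $P$ and then push the constant-coefficient matrix $\nabla^2 V(c)$ through the variational equation by rescaling and changing the independent variable to the radius. First I would use the hypothesis that $c$ lies in the equator of $G$ (Definition~\ref{def1}) to fix an $\mathbb{SO}_2$-subgroup of $G$ acting as rotations on the plane $P\ni c$ and trivially on $P^\perp$; write $g_\theta$ for the rotation by angle $\theta$ and set $\hat c=c/\lVert c\rVert$ (legitimate since $\lVert c\rVert^2\neq0$). Because every $g\in G$ is complex-orthogonal and preserves $V$, differentiating $V(g.x)=V(x)$ yields the equivariances $\nabla V(g.x)=g\,\nabla V(x)$ and $\nabla^2 V(g.x)=g\,\nabla^2 V(x)\,g^{-1}$. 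Combined with the homogeneity of $V$ (so that $\nabla V$ and $\nabla^2 V$ are homogeneous of degrees $-2$ and $-3$) and with $\nabla V(c)=-c$, this shows that at any point $q=\phi\,g_\theta\hat c\in P$ the force is $\nabla V(q)=-\lVert c\rVert^3\phi^{-2}\,g_\theta\hat c$, hence central and radial. Therefore $V|_P$ is a Kepler potential of strength $\lVert c\rVert^3$, the whole orbit stays in $P$, and the conic with angular momentum $C\lVert c\rVert^2$ and energy $E\lVert c\rVert^2$ is an honest solution of $\ddot q=\nabla V$.

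Next I would write the variational equation $\ddot X=\nabla^2 V(q(\tau))\,X$ along this orbit. Using $q=\phi\,g_\theta\hat c$, the degree $-3$ homogeneity and the equivariance above turn the coefficient matrix into $\nabla^2 V(q)=\lVert c\rVert^3\phi^{-3}\,g_\theta\,\nabla^2 V(c)\,g_\theta^{-1}$, a conjugate of the constant matrix $\nabla^2 V(c)$ by the rotation. Introducing the rescaled radius $t=\phi/\lVert c\rVert$ absorbs the Kepler strength, so the right-hand side becomes exactly $t^{-3}\,g_\theta\,\nabla^2 V(c)\,g_\theta^{-1}X$, which is $R_{\theta(t)}^{-1}\nabla^2 V(c)R_{\theta(t)}X$ after setting $R_\theta:=g_\theta^{-1}$.

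Then I would change the independent variable from physical time $\tau$ to $t$. Conservation of energy and angular momentum for the Kepler motion gives $\dot t^2=t^{-2}(-C^2+2t+2Et^2)$ and, by differentiating, $\ddot t=t^{-3}(C^2-t)$. Substituting $\dot X=X'\dot t$ and $\ddot X=X''\dot t^2+X'\ddot t$ into the variational equation and multiplying through by $t^3$ produces precisely \eqref{eq6}. For the coefficient field, I would use the focal conic equation $\phi^{-1}=A+B\cos(\theta-\theta_0)$ together with $\dot\theta=C\lVert c\rVert^2\phi^{-2}$: this expresses $\cos\theta$ rationally in $t$ and $\sin\theta$ rationally in $t$ and $\dot t\propto\sqrt{2E-C^2\phi^{-2}+2\phi^{-1}}$, so the entries of $R_{\theta(t)}$ lie in the stated field.

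I expect the main obstacle to be the bookkeeping that makes the reduction rigorous rather than any single deep step: one must verify that the $\mathbb{SO}_2\subset G$ really acts by rotation on $P$ and identically on its orthogonal complement (so that $R_\theta$ is a genuine planar rotation embedded in the $n$-dimensional Hessian), and one must track the interacting homogeneity factors and the ambiguous complex square root $\lVert c\rVert=\sqrt{\langle c,c\rangle}$ carefully enough to land on the exact polynomial $-C^2+2t+2Et^2$ and on the precise multipliers $\lVert c\rVert^2$ in the parameters. The equivariance and change-of-variable computations themselves are routine once these normalizations are pinned down.
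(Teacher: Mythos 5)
Your proposal is correct and follows essentially the same route as the paper: restrict to the invariant plane where the motion is Keplerian, write the variational equation along the conic, use homogeneity and rotation equivariance to reduce the Hessian to $R_{\theta(t)}^{-1}\nabla^2V(c)R_{\theta(t)}\,\phi^{-3}$, and change the independent variable to the (rescaled) radius via energy and angular-momentum conservation. Your version is in fact somewhat more careful than the paper's about the normalization by $\lVert c\rVert$ and about identifying the coefficient field of $R_{\theta(t)}$.
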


\begin{proof}
Let $c$ be a Darboux point of $V$ in the equator of $G$, with multiplier $-1$ and ``norm'' $\gamma=\lVert c\lVert^2\neq 0$. We note $P$ a plane in $G.c$ containing $c$. After rotation of the coordinates, we can suppose that $c=(\gamma,0,\dots,0)$ and that $P$ is generated by $(\gamma,0,\dots,0),(0,\gamma,0,\dots,0)$. A conic orbit for the Darboux point $c$ corresponds to the orbit given by
$$(q_1,q_2)=\varphi_t(1,0)\qquad q_i=0\quad i=3\dots n$$
where $\varphi_t$ is given by
\begin{equation}\label{orbitelliptic}
\varphi_t(x,y)=\phi(t) \left( \begin {array}{cc} cos(\theta(t))&-sin(\theta(t))\\sin(\theta(t))&cos(\theta(t))\\ \end {array} \right)\left( \begin {array}{c} x\\y\\ \end {array} \right)
\end{equation}
Replacing this in the equation of energy conservation and angular momentum (the potential $V$ restricted to the plane $P$ is invariant by rotation), we get
$$\frac{1}{2}\dot{\phi}^2 \gamma^2 +\frac{1}{2}\gamma^2 \phi^2 \dot{\theta}^2+\frac{V(c)}{\phi}=E\gamma^2 \qquad C\gamma^2=\gamma \dot{\theta}$$
And after replacing, we get
$$\frac{1}{2}\dot{\phi}^2 \gamma^2 +\gamma^2\frac{C^2}{2\phi^2}+\frac{V(c)}{\phi}=E\gamma^2$$
Knowing that the multiplier is $-1$, we have using Euler equation for $V$
$$\gamma^2=-V(c) \qquad \frac{1}{2}\dot{\phi}^2 +\frac{C^2}{2\phi^2}-\frac{1}{\phi}=E$$
The variational equation is then of the form
$$\ddot{X}=\frac{1}{\phi(t)^3}\nabla^2V(R_{\theta(t)} c) X$$
with $R_{\theta(t)}$ a rotation matrix. We also know that we are on some conic orbit (due to the fact that the homogeneity degree is $-1$)
$$\phi(t)=\frac{p}{1+e\cos(\theta)}$$
with $p$ and $e$ some parameters depending on $C,E$. We get that $\cos(\theta),\sin(\theta)$ are rational fractions in $\phi,\dot{\phi}$. Then, with variable change $\phi \longrightarrow t$ we get the following expression 
\begin{equation}\label{eq14}
t(-C^2+2t+2Et^2)\ddot{X}+(-t+C^2)\dot{X}=\nabla^2V(R_{\theta(t)} c) X
\end{equation}\newpage
We know that the potential $V$ is invariant by rotation. Then the matrix $\nabla^2V(R_{\theta(t)} c)$ equals to $\nabla^2V(c)$ after a basis change and gives
\begin{equation}\label{eq3}
\nabla^2V(R_{\theta(t)} c)=R_{\theta(t)}^{-1} \nabla^2V(c)R_{\theta(t)}
\end{equation}
Replacing this in \eqref{eq14} gives us the equation \eqref{eq6}.
\end{proof}

\begin{rem}
The main difficulty of this variational equation is that it does not decouple after basis change. Indeed, we can make a basis change with some matrix $P$, but this matrix $P$ should commute with the rotations $R_{\theta(t)}$.
\end{rem}

Let us now give a proper definition of what will call integrable on some level of first integrals, as given in Theorem \ref{thm15}
 
\begin{thm}\label{thmdefine}
Let $V$ be a homogeneous meromorphic potential of degree $-1$ in dimension $n\geq 2$. Let $I_1,\dots,I_k$ be meromorphic first integrals such that 
$$\{I_i,I_j\}=0 \quad \forall i,j$$
where $\{\;,\;\}$ is the Poisson bracket. We pose
$$\mathcal{O}=\left\lbrace(p,q)\longrightarrow g(p,q,I(p,q)) \hbox{ with } g \hbox{ holomorphic on }\mathbb{C}^{2n+k}\right\rbrace$$
the ring of holomorphic functions in $p,q,I$. We suppose that $\;<I_1,\dots,I_k>$ is a prime ideal on $\mathcal{O}$ and we pose $K=\hbox{Frac}(\mathcal{O}/<I_1,\dots,I_k>)$ the corresponding fraction field. Then the following functions are well defined
\begin{itemize}
\item For all $i=1\dots k$, the functions $\varphi_i: K\longrightarrow K,\;\;f \longrightarrow \{f,I_i\}$.
\item The function
$$\Psi:\left(\bigcap\limits_{i=1}^k \varphi_i^{-1}(0)\right)^2\longrightarrow K,\;\;f,g \longrightarrow \{f,g\}$$
\item The functions $K^{n-k}\longrightarrow K$ which associate to $f_1,\dots,f_{n-k}$ a sub determinant of size $n\times n$ of the Jacobian matrix (a matrix of size $2n\times n$) of $I_1,\dots,I_k,f_1,\dots,f_{n-k}$.
\end{itemize}
\end{thm}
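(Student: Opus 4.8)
The statement to prove is purely one of well-definedness, so the plan is to check, for each of the three constructions, that (i) it respects the equivalence $f\sim f+\sum_j h_jI_j$ defining the quotient $\mathcal{O}/\langle I_1,\dots,I_k\rangle$, and (ii) it extends consistently from this quotient --- which is an integral domain because the ideal is prime --- to its fraction field $K$. The only structural inputs I would use are that the Poisson bracket is a biderivation obeying the Leibniz and Jacobi identities, that the $I_i$ are in involution ($\{I_i,I_j\}=0$), and the multilinearity and alternating property of the determinant.

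For the maps $\varphi_i$, I would first note that $\varphi_i=\{\,\cdot\,,I_i\}$ is a derivation of $\mathcal{O}$ by the Leibniz rule. Since $\varphi_i(I_j)=\{I_j,I_i\}=0$, the Leibniz rule gives $\varphi_i(hI_j)=I_j\varphi_i(h)\in\langle I_1,\dots,I_k\rangle$, so $\varphi_i$ maps the ideal into itself and therefore descends to a derivation of the quotient domain. A derivation of a domain extends uniquely to its fraction field by the quotient rule, which defines $\varphi_i$ on $K$.

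For $\Psi$ the key point is the role of the domain of definition. Replacing $f$ by $f+\sum_j h_jI_j$ changes $\{f,g\}$ by $\sum_j\big(I_j\{h_j,g\}+h_j\{I_j,g\}\big)$; the first summand lies in the ideal, while the second equals $-\sum_j h_j\varphi_j(g)$, which lies in the ideal exactly when $g\in\bigcap_i\varphi_i^{-1}(0)$. Thus restricting both arguments to the common kernel is precisely what makes $\{f,g\}$ independent of the chosen representatives modulo the ideal; the extension to $K$ is again by the quotient rule. One may also check, via the Jacobi identity, that each $\varphi_i$ is a derivation of the bracket, so that this common kernel is a Poisson subalgebra and $\Psi$ is the reduced bracket on the level set, although this is not needed for well-definedness.

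For the Jacobian subdeterminants, I would argue by multilinearity. Replacing $f_l$ by $f_l+\sum_j h_jI_j$ changes its gradient column by $\sum_j h_j\nabla I_j+\sum_j I_j\nabla h_j$; expanding the determinant in this column, the terms $h_j\nabla I_j$ reproduce columns already present (the gradients of the $I_j$) and hence contribute zero by the alternating property, while the terms $I_j\nabla h_j$ contribute elements of the ideal. Hence every $n\times n$ subdeterminant depends only on the classes of the $f_l$ modulo $\langle I_1,\dots,I_k\rangle$. I expect the main obstacle to be not this descent but the verification that the outputs genuinely represent elements of $K$: the individual partial derivatives $\partial_{p_j},\partial_{q_j}$ do not preserve the ideal and do not descend to the quotient, and likewise $\{f,I_i\}$ involves derivatives of the $I_i$ which need not lie in $\mathcal{O}$. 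The resolution --- and the reason the third item includes $I_1,\dots,I_k$ among the functions differentiated --- is that in each construction the non-descending pieces are organized (by the involution relations, by the kernel condition, and by the determinant structure respectively) into combinations that do reduce to well-defined elements of $K$; making this closure precise, by working throughout in the ambient meromorphic Poisson field and reducing only at the end modulo the prime ideal, is where the care is required.
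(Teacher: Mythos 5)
Your proposal is correct and follows essentially the same route as the paper: representative-independence modulo the ideal via the Leibniz rule and the involution relations $\{I_i,I_j\}=0$ (resp.\ the kernel condition on $g$) for the first two items, and multilinearity of the determinant together with the presence of the columns $\nabla I_j$ for the subdeterminants. The only cosmetic difference is that you package the first item as the general fact that a derivation preserving an ideal descends to the quotient domain and extends to its fraction field, where the paper performs the explicit $P/Q$ computation; your closing remark about whether the outputs genuinely land in $K$ flags a point the paper silently glosses over rather than a gap in your own argument.
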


\begin{proof}
Let us write a representant of $f\in K$ as $P/Q$, $P,Q\in\mathcal{O}$. We just need to check that the value of the function $\varphi_i$ does not depend on the choice of the representant. We consider $h_1,\dots,h_k,g_1,\dots,g_k\in\mathcal{O}$ and we have
\begin{align*}
\left\lbrace\frac{P+\sum\limits_{s=1}^k h_s I_s}{Q+\sum\limits_{s=1}^k g_s I_s},I_i\right\rbrace=\\
\left(Q+\sum\limits_{s=1}^k g_s I_s\right)^{\!-1}\!\! \left\lbrace P+\sum\limits_{s=1}^k h_s I_s,I_i\right\rbrace-
\frac{P+\sum\limits_{s=1}^k h_s I_s}{(Q+\sum\limits_{s=1}^k g_s I_s)^2}\left\lbrace Q+\sum\limits_{s=1}^k g_s I_s,I_i\right\rbrace=\\
\left(Q+\sum\limits_{s=1}^k g_s I_s\right)^{-1} \{P,I_i\}-\frac{P+\sum\limits_{s=1}^k h_s I_s}{(Q+\sum\limits_{s=1}^k g_s I_s)^2}\{Q,I_i\}=\\
Q^{-1} \{P,I_i\}-PQ^{-2}\{Q,I_i\}=\left\lbrace\frac{P}{Q},I_i\right\rbrace
\end{align*}
so the function is well defined on $K$.

Let us consider $f_1,f_2\in \cap_{i=1}^k \varphi_i^{-1}(0)$ and we write $P/Q$ a representant of $f_1$. Using the fact that $\{I_i,f_2\}=0$, we can do exactly the same calculations as before just replacing $,I_i\}$ by $,f_2\}$. Using the fact that the Poisson bracket is symmetric, we can do the same interverting the indices $1,2$. So the function $\Psi$ is well defined.

Let us consider $x$ one of the variables $p,q$, $f\in K$ and $P/Q$ a representant. We have the classical formula
$$\partial_x \left( \frac{P}{Q}\right)=Q^{-1} \partial_x P -P Q^{-2} \partial_x Q$$
So when we add to $P$ and $Q$ elements of $<I_1,\dots,I_k>$, we are adding in the determinant a linear combination of $\partial_x I_1,\dots,\partial_x I_k$. Using the fact that the determinant is multilinear, this will not change the value of the determinant because it contains the columns of the derivatives of $I_i,\;i=1\dots,k$.
\end{proof}

One need to be extremely cautious when manipulating these derivatives, because $K$ is not a differential field, so we cannot conclude directly that all notions we will need (Poisson brackets, independence) are well defined. For example, the Jacobian matrix itself we consider is not well defined on $K$, only its sub determinant of size $n\times n$ are. Remark that for the following, we will always consider a representant and forget the field $K$, and so this complicated definition will have no impact. This theorem is here to give a proper definition of integrability on some particular level of first integrals, and this complicated presentation has some advantages as it includes also all singular levels thanks to the prime ideal condition (we never ask for example that the first integrals $I_i$ be independent).

\begin{defi}\label{def2}
Let $V$ be a homogeneous meromorphic potential of degree $-1$ in dimension $n\geq 2$. Let $I_1,\dots,I_k$ be meromorphic first integrals such that $\{I_i,I_j\}=0 \quad \forall i,j$
We say that $V$ is meromorphically integrable on the manifold $(I_1,\dots,I_k)=0$ if there exists functions $F_1,\dots,F_{n-k}\in K$ ($K$ is defined as in Theorem \ref{thmdefine}) such that
$$\{H,F_i\}=0\in K \;\;\forall i \qquad \{I_i,F_j\}=0\in K \;\;\forall i,j \qquad \{F_i,F_j\}=0 \;\;\forall i,j$$
and such that at least one of the sub determinants of size $n\times n$ of the Jacobian matrix of $I_1,\dots,I_k,F_1,\dots,F_{n-k}$ is not $0$ in $K$ (this corresponds to the condition of independence almost everywhere).
\end{defi}

\begin{thm}\label{thm3}
Let $V\neq 0$ be a homogeneous meromorphic potential of degree $-1$ in dimension $n\geq 2$. Suppose that an "angular momentum"
$$C=\sum\limits_{k\leq i>j\leq 0 } a_{i,j} (p_i q_j-p_j q_i)\qquad\quad  a_{i,j}\in\mathbb{C}$$
is a non trivial first integral of $V$. Let us fix the value of energy $H=H_0\neq 0$ and angular momentum $C=C_0 \neq 0$. If $V$ is integrable on this manifold of codimension $2$, then $V$ is integrable on the hypersurface $C^2H=C_0^2H_0$.
\end{thm}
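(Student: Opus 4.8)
The engine of the proof is the scaling symmetry enjoyed by every homogeneous potential of degree $-1$. The plan is to check that the transformation $\Phi_\mu(q,p)=(\mu q,\mu^{-1/2}p)$ sends solutions of $\ddot q=\nabla V(q)$ to solutions (up to the time reparametrization $t\mapsto \mu^{3/2}t$), using only $\deg V=-1$. A direct computation then gives $H\circ\Phi_\mu=\mu^{-1}H$ and $C\circ\Phi_\mu=\mu^{1/2}C$, so that $C^2H$ is invariant under $\Phi_\mu$ while the symplectic form is merely rescaled, $\Phi_\mu^\ast\omega=\mu^{1/2}\omega$. Consequently the hypersurface $\{C^2H=C_0^2H_0\}$ is exactly the union over $\mu\in\mathbb{C}^\ast$ of the images $\Phi_\mu(\{(H,C)=(H_0,C_0)\})$ of the codimension $2$ level on which $V$ is assumed integrable; a point $x$ of the hypersurface (with $C(x),H(x)\neq0$) is carried back to that level by $\Phi_{\mu(x)}^{-1}$ with $\mu(x)=C(x)^2/C_0^2=H_0/H(x)$, the two expressions agreeing precisely because $C^2H=C_0^2H_0$.

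Let $F_1,\dots,F_{n-2}\in K_0=\mathrm{Frac}(\mathcal{O}/\langle H-H_0,C-C_0\rangle)$ be the first integrals witnessing integrability on the level. I would transport them to the hypersurface by $\hat F_i=F_i\circ\Phi_{\mu(\cdot)}^{-1}$. Because $\mu(\Phi_\nu x)=\nu\,\mu(x)$, each $\hat F_i$ is invariant under the scaling flow, i.e. it is the unique scaling-invariant extension of $F_i$ from the level to the hypersurface, and on the level itself $\mu=1$ so $\hat F_i=F_i$. The candidate $n-1$ functions realizing Definition~\ref{def2} for the single constraint $I_1=C^2H-C_0^2H_0$ are then $C,\hat F_1,\dots,\hat F_{n-2}$.

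The involution relations would be verified as follows. One has $\{H,C\}=0$ by hypothesis and $\{I_1,C\}=\{C^2H,C\}=0$ trivially. For the $\hat F_i$, the key observation is that the Hamiltonian flow of $H$ and the flow of $C$ both preserve $H$ and $C$, hence preserve $\mu(x)$, so along either flow $\hat F_i$ coincides with $F_i\circ\Phi_{1/\mu_0}$ for a fixed constant $\mu_0$; since $\Phi_{1/\mu_0}$ conjugates these flows on the hypersurface to the corresponding flows on the $(H_0,C_0)$ level, where $F_i$ is invariant ($\{H,F_i\}=\{C,F_i\}=0$), we get $\{H,\hat F_i\}=\{C,\hat F_i\}=0$ on the hypersurface, and therefore $\{I_1,\hat F_i\}=C^2\{H,\hat F_i\}+2CH\{C,\hat F_i\}=0$. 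For $\{\hat F_i,\hat F_j\}$ one writes $d\hat F_i=(\Phi_{1/\mu})^\ast dF_i+a_i\,dC$, the second term coming from the point dependence of $\mu$, so that expanding the bracket leaves three contributions: the transported term, equal up to a nonzero conformal factor to $\{F_i,F_j\}\circ\Phi_{1/\mu}=0$; the cross terms, proportional to $\{F_i,C\}\circ\Phi_{1/\mu}=0$; and the last term proportional to $\{C,C\}=0$. Hence $\{\hat F_i,\hat F_j\}=0$ on the hypersurface.

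There remains independence. Restricting to the level, $\hat F_i=F_i$ and $dI_1=C_0^2\,dH+2C_0H_0\,dC$, so with $C_0\neq0$ the tuple $(I_1,C,\hat F_1,\dots,\hat F_{n-2})$ is obtained from the independent tuple $(H,C,F_1,\dots,F_{n-2})$ by an invertible transformation together with column operations removing the $a_i\,dC$ corrections; a nonvanishing $n\times n$ minor of the Jacobian therefore persists somewhere on the hypersurface, which is all that is needed since $\langle I_1\rangle$ is prime and a minor nonzero at one point of the hypersurface is nonzero in $K_1=\mathrm{Frac}(\mathcal{O}/\langle I_1\rangle)$. Throughout, each identity is first established as a genuine vanishing on the hypersurface and then read as an equality in $K_1$ via primeness, exactly in the spirit of Theorem~\ref{thmdefine}. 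I expect the main obstacle to be the bracket $\{\hat F_i,\hat F_j\}$: the point-dependent transport parameter $\mu(x)$ forces the extra $dC$ terms into every differential, and the computation closes only because $C$ is a first integral in involution with each $F_i$, so that all correction terms collapse to brackets with $C$ that were assumed to vanish.
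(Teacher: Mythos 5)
Your proof is correct and follows essentially the same route as the paper: your scaling $\Phi_\mu(q,p)=(\mu q,\mu^{-1/2}p)$ is the paper's map $\varphi(p,q)=(\alpha p,\alpha^{-2}q)$ with $\alpha=\mu^{-1/2}$, and in both arguments the key point is that this map preserves $C^2H$ while sweeping the level $(H,C)=(H_0,C_0)$ over the entire hypersurface $C^2H=C_0^2H_0$ (using $C_0^2H_0\neq 0$). You go beyond the paper by explicitly constructing the transported integrals $\hat F_i$ and checking involution and independence in the sense of Definition~\ref{def2} --- details the paper leaves implicit after asserting the union identity --- and these verifications are sound.
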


\begin{proof}
We consider the following transformation
\begin{equation}\label{eq4}
\varphi\;\; \mathbb{C}^{2n} \longrightarrow \mathbb{C}^{2n} \qquad (p,q)\longrightarrow (\alpha p,\alpha^{-2}q)
\end{equation}
We see that the transformation $\varphi$ just multiply the Hamiltonian $H\longrightarrow \alpha^2 H$, and this does not change the integrability of $H$. Let us suppose that $H$ be integrable on the manifold $(H,C)=(H_0,C_0)$. We have
$$H(\varphi(p,q))=\alpha^2 H \qquad C(\varphi(p,q))=\alpha^{-1} C \qquad (C^2H)(\varphi(p,q))=C^2H$$
Then $H$ is also integrable on the manifold $(H,C)=(\alpha^2 H_0,\alpha^{-1}C_0)$. We also have
$$\bigcup_{\alpha\in\mathbb{C}^*} (\alpha^2 H_0,\alpha^{-1}C_0) =\left\lbrace (p,q)\in\mathbb{C}^{2n} \hbox{ such that } C(p,q)^2H(p,q)=C_0^2H_0 \right\rbrace $$
because $C_0^2H_0\neq 0$. This gives the theorem.
\end{proof}

\begin{rem}
We can see that the study of integrability on a specific manifold make sense only if this manifold is invariant by $\varphi$, because if it is not the case, then our potential will be integrable on a manifold with higher dimension. Remark that the ideals $<C-C_0,H-H_0>,\;<C^2 H-C_0^2H_0>$ are always prime for $C_0^2H_0\neq 0$ and $V\neq 0$, so integrability on these manifolds is well defined, contrary to the case $<C^2H>$ which will need to be splitted in two parts $<C>,<H>$.
\end{rem}

\section{Integrability table}

\begin{thm}\label{thm4}
Let $V$ be a homogeneous meromorphic potential of degree $-1$ in dimension $n\geq 2$ and $G$ its symmetry group. Suppose it exists $c$ in the equator of $G$ such that $c$ is a Darboux point of $V$ with multiplier $-1$ and $\lVert c\lVert^2\neq 0$. We pose $P$ the plane associated to $c$ and $\tilde{G}$ the group of rotations which stabilize the plane $P$. Let $E$ be an eigenspace of $\nabla^2V(c)$ invariant by $\tilde{G}$. If $V$ is meromorphically integrable (respectively for some specific level of energy and angular momentum $(H,C)$), then the following equation possess a Galois group whose identity component is abelian (respectively for some specific level of energy and angular momentum $(H,C)$)
\begin{equation}\label{eq5}
t(-C^2+2t+2Ht^2)\ddot{X}+(-t+C^2)\dot{X}=\lambda X \qquad H,C,\lambda\in\mathbb{C}
\end{equation}
where $\lambda$ is the eigenvalue of $\nabla^2V(c)$ associated to the eigenspace $E$.
\end{thm}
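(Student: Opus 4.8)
The strategy is to specialize the variational equation of Theorem~\ref{thm1} to the eigenspace $E$, observe that it then collapses to the scalar equation~\eqref{eq5}, and finally feed the resulting (sub)system into Morales--Ramis theory.

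First I would invoke Theorem~\ref{thm1} for the conic orbit attached to $c$, writing $H$ for its energy parameter (this is the quantity called $E$ in Theorem~\ref{thm1}); in the level-set case I pick the orbit lying on the prescribed level, i.e. with angular momentum $C\lVert c\rVert^2$ and energy $H\lVert c\rVert^2$ equal to the fixed values. This produces the variational equation~\eqref{eq6} with right-hand side $R_{\theta(t)}^{-1}\nabla^2V(c)R_{\theta(t)}X$. Every rotation $R_{\theta(t)}$ stabilizes $P$, hence lies in $\tilde G$, and $E$ is assumed $\tilde G$-invariant; therefore $R_{\theta(t)}X\in E$ for $X\in E$, so $\nabla^2V(c)R_{\theta(t)}X=\lambda R_{\theta(t)}X$ and consequently $R_{\theta(t)}^{-1}\nabla^2V(c)R_{\theta(t)}X=\lambda X$. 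Thus $E$ is an invariant subspace of~\eqref{eq6} on which the equation reduces to $\dim E$ decoupled copies of~\eqref{eq5}.

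Next I would transfer this to the level of differential Galois groups. Being invariant under both $\nabla^2V(c)$ and all the $R_{\theta(t)}$, the space $E$ cuts out a sub-differential-module of the module $M$ attached to~\eqref{eq6}; since that submodule is a direct sum of copies of the scalar equation~\eqref{eq5}, its Galois group coincides with that of~\eqref{eq5} and is a quotient of $\mathrm{Gal}(M)$. A surjection of linear algebraic groups carries identity component onto identity component and abelian onto abelian, so it suffices to show that the identity component of $\mathrm{Gal}(M)$---or, in the level-set case, of the restriction of $M$ to the directions transverse to $P$---is abelian.

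For full meromorphic integrability this last point is precisely Theorem~\ref{thm:Morales} applied to the conic orbit $\Gamma$, and the proof is complete. The hard part, which I expect to be the real obstacle, is the level-set statement, because the extra integrals $F_1,\dots,F_{n-2}$ furnished by Definition~\ref{def2} live only on $\Sigma=\{(H,C)=(H_0,C_0)\}$ and Theorem~\ref{thm:Morales} cannot be quoted verbatim. Here I would use that $\Gamma\subset\Sigma$ and that, along $\Gamma$, the directions tangent to $P$ are spanned by the infinitesimal variations of the orbit under the flow and under changes of $H$ and $C$, hence are already controlled by these two integrals; the remaining directions of $E$, lying in the orthogonal complement of $P$, annihilate $dH$ and $dC$ along $\Gamma$ (for $dC$ one uses that the conserved $C$ generates the $\mathbb{SO}_2$ acting in $P$ and so does not couple $P$ to its complement) and thus lie in $T\Sigma$ transversally to the characteristic directions $X_H,X_C$, which point along $P$. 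Consequently the $E$-block descends to the coisotropic reduction of $\Sigma$, on which the $F_1,\dots,F_{n-2}$ form a complete involutive family; applying Morales--Ramis to this reduced Hamiltonian system, of which $\Gamma$ is still an orbit, yields an abelian identity component for the reduced variational equation, and~\eqref{eq5} is again a subquotient of it. Carrying this reduction out rigorously is where the work concentrates: one must handle the warning after Theorem~\ref{thmdefine} that $K$ is not a differential field, so that the differentials $dF_i$ and their involutivity can only be read off from genuine representatives evaluated along $\Gamma$.
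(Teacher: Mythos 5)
Your proposal follows essentially the same route as the paper: restrict the variational equation \eqref{eq6} of Theorem~\ref{thm1} to the $\tilde G$-invariant eigenspace $E$, on which $R_{\theta(t)}^{-1}\nabla^2V(c)R_{\theta(t)}$ acts as $\lambda\,\mathrm{Id}$, so the system collapses to copies of \eqref{eq5}, then pass to the quotient Galois group and invoke Morales--Ramis along the conic orbit. The paper's own proof is in fact terser than yours --- it states the reduction in a few lines and does not carry out the level-set/coisotropic discussion you flag as the hard part, leaving that to the Morales--Ramis framework for integrability on invariant manifolds.
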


\begin{proof} This is a direct application of Theorem~\ref{thm1}. We have a plane $P$ and all vectors in this plane are Darboux points. The potential restricted to this plane is invariant by rotation (because the Darboux point $c$ is in the equator). On the eigenspace $E$, the matrix $\nabla^2 V(c)$ corresponds to $\lambda Id$. We know moreover that the space $E$ is invariant by the rotations $R_{\theta(t)}$ which corresponds to elements of $\tilde{G}$. We have then
$$\left.{R_{\theta(t)}^{-1} \nabla^2 V(c)R_{\theta(t)}}\right|_E=\lambda Id$$
So the equation \eqref{eq6} on the eigenspace $R$ simplifies and becomes equation \eqref{eq5}.
\end{proof}

\begin{rem}
The Theorem~\ref{thm4} has lots of hypotheses, but in fact only one of them is really restrictive. The existence of an invariant plane $P$ such that the potential is invariant by rotation on this plane is common in practical cases. This often results by symmetry of the system. This is for example always the case in the $n$ body problem. The restrictive condition is the existence of $E$ invariant by rotation. In fact, this is a condition very similar to the codiagonalization constraint from Maciejewski, Przybylska they found studying potentials which are the sum of two homogeneous potentials. In fact, a potential invariant in dimension $n$ by rotation can also be reduced to become a potential in dimension $n-1$ which will be a sum of a homogeneous potential and the potential $C^2/r^2$. This new potential is not homogeneous and our condition correspond to the commutation of the Hessian matrices (at least on some non trivial subspace).
\end{rem}

\begin{thm}\label{thm5}
We consider the differential equation
\begin{equation}\label{eq7}
t(-C^2+2t+(C^2-2)t^2)\ddot{X}-(t-C^2)\dot{X}=\lambda X \qquad C,\lambda \in\mathbb{C}
\end{equation}
This is a Fuchsian equation with $4$ singularities, of Heun type \cite{19} which corresponds to \eqref{eq5} with $H=\frac{1}{2}C^2-1$. The corresponding Galois group is $G =SL_2(\mathbb{C})$ except for the following values of $(C,\lambda)$

\begin{center}

\begin{tabular}{|c|c|}\hline
$C$&$\lambda$\\\hline
$C=0$&$\lambda\in\left\lbrace\textstyle\frac{1}{2}(k-1)(k+2),\;k\in\mathbb{N}\right\rbrace$\\\hline
$C=1$&$\lambda\in\left\lbrace-k^2,\;k\in\mathbb{N}\right\rbrace$\\\hline
$C=\sqrt{2}$&$\lambda\in\left\lbrace\textstyle\frac{1}{2}(k-1)(k+2),\;k\in\mathbb{N}\right\rbrace$\\\hline
$C\notin\{0,1,\sqrt{2}\}$&$\lambda\in\{0,-1\}$\\\hline
\end{tabular}\\
\end{center}

\end{thm}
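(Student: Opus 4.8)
The plan is to treat \eqref{eq7} as a second order Fuchsian equation of Heun type and run the Kovacic algorithm on its normal form $y''=ry$: for a second order linear equation one has $G=SL_2(\mathbb{C})$ exactly when none of Kovacic's three special cases (reducible, imprimitive/dihedral, finite) occurs, so the task is to pin down the pairs $(C,\lambda)$ realizing one of these cases. First I would locate the singularities. Writing $Q(t)=-C^2+2t+(C^2-2)t^2=(C^2-2)(t-1)(t-t_2)$ with $t_2=C^2/(2-C^2)$, the equation is Fuchsian with regular singular points $0,1,t_2,\infty$; the indicial equations together with the Fuchs relation give exponents $\{0,2\}$, $\{0,\tfrac12\}$, $\{0,\tfrac12\}$, $\{0,-1\}$ there. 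The decisive structural point is that none of these exponents involves $\lambda$: it is the accessory parameter of the Heun equation and influences only the global monodromy.

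Before the case split I would record two solution families valid for every $C$. For $\lambda=0$ the constant $X=1$ solves \eqref{eq7}, so the equation is reducible and $G\ne SL_2(\mathbb{C})$. For $\lambda=-1$ a direct substitution shows that $X=t-C^2$ and $X=\sqrt{Q(t)}$ both solve \eqref{eq7}; their Wronskian is $-t(C^2-1)^2/\sqrt{Q}$, nonzero for $C^2\ne 1$, so the solution space is spanned by a rational and a quadratic function and $G$ is finite of order at most $2$. This already yields the last two rows of the table (the value $\lambda\in\{0,-1\}$ for every $C$, and $\lambda\in\mathbb{C}$ in the totally degenerate situation).

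For $C\in\{0,1,\sqrt2\}$ one singular point collides with another ($t_2\to 0$, $t_2\to 1$, $t_2\to\infty$), and \eqref{eq7} degenerates to a hypergeometric (Riemann) equation with three singular points, into whose exponents $\lambda$ now enters. Computing the Riemann scheme gives exponent differences $(\tfrac12\sqrt{9+8\lambda},\tfrac12,1)$ for $C=0$, $(2,2\sqrt{-\lambda},1)$ for $C=1$, and $(2,\tfrac12,\tfrac12\sqrt{9+8\lambda})$ for $C=\sqrt2$. Invoking Kimura's classification, the reducibility (odd-integer) condition reads $\sqrt{9+8\lambda}\in 2\mathbb{Z}+1$, i.e. $\lambda=\tfrac12(k-1)(k+2)$, for $C\in\{0,\sqrt2\}$, and $2\sqrt{-\lambda}\in 2\mathbb{Z}$, i.e. $\lambda=-k^2$, for $C=1$; a finite inspection of the dihedral and polyhedral entries of the Schwarz list, confirming they add no new value, closes these three rows.

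The core is the generic case $C\notin\{0,1,\sqrt2\}$, where I must prove that $\lambda\notin\{0,-1\}$ forces $G=SL_2(\mathbb{C})$. I would eliminate Kovacic's finite case first: the integer exponent differences $2$ and $1$ at $0$ and $\infty$ make the local monodromies there have a single eigenvalue, so in a finite (unipotent-free) group they must be trivial, and the monodromy is generated by the two order-two local monodromies at $1$ and $t_2$ — a dihedral, never a polyhedral, group. For the reducible case, a hyperexponential solution must take the form $X=t^{f_0}(t-1)^{f_1}(t-t_2)^{f_2}P(t)$ with each $f_i$ a local exponent and $P$ a polynomial; matching the growth at infinity, where the exponents are $0,-1$, forces $f_0+f_1+f_2+\deg P\in\{0,1\}$, and the finitely many admissible choices return exactly $X=1$, $X=t-C^2$, $X=\sqrt Q$, hence $\lambda\in\{0,-1\}$. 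The dihedral case I would reduce to the reducible one on the double cover $w^2=Q(t)$ that rationalizes $\sqrt Q$ and resolves the two half-integer points $1,t_2$, or equivalently search for a rational solution of the symmetric square, whose exponents are integral at $1$ and $t_2$ only with the value $1$; this rigidifies its shape and again forces $\lambda\in\{0,-1\}$ for generic $C$. The hard part is precisely this last step: for a genuine four-point Heun equation, unlike the rigid hypergeometric degenerations, reducibility and dihedrality are governed not by the local exponents but by the accessory parameter through the subleading residues, so one must verify that the resulting polynomial consistency conditions in $\lambda$ admit, for generic $C$, no root outside $\{0,-1\}$.
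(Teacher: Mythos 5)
Your setup is sound and large parts of it match the paper: the singularity structure, the exponents, the explicit solutions $X=1$ (for $\lambda=0$) and $X=t-C^2$, $X=\sqrt{Q}$ (for $\lambda=-1$, with your Wronskian $-t(C^2-1)^2/\sqrt{Q}$ correct), and the treatment of the confluent values $C\in\{0,1,\sqrt 2\}$ via the Riemann scheme and Kimura's classification, which the paper itself acknowledges as a valid route. However, there is a genuine gap in the core generic case $C\notin\{0,1,\sqrt2\}$: you never actually exclude the imprimitive (dihedral) case. You reduce the finite case to the dihedral one, and for the dihedral one you only propose a method (double cover, or rational solutions of the symmetric square) and then explicitly concede that ``one must verify that the resulting polynomial consistency conditions in $\lambda$ admit, for generic $C$, no root outside $\{0,-1\}$'' --- that verification is precisely the content of the claim for the last row of the table, and it is not carried out. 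As written, your argument establishes only that $\{0,-1\}$ are exceptional values, not that they are the only ones.

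The paper closes exactly this gap with one computation you never make: the Frobenius expansions at $t=0$ and $t=\infty$. The solution of exponent $0$ at the origin carries a term $\ln t\cdot\bigl(\tfrac{\lambda^2+\lambda}{C^4}t^2+O(t^3)\bigr)$, so for $\lambda\notin\{0,-1\}$ the local monodromy at $0$ is a nontrivial unipotent. No nontrivial unipotent lies in a finite group or in the normalizer of a torus in $SL_2(\mathbb{C})$, so Kovacic's cases 2 and 3 are eliminated simultaneously, and only the reducible case remains; moreover the log forces the exponent of any power-product solution to be $2$ at $0$ and $0$ at $\infty$, so the nonnegative $m_i$ must all vanish, giving $X=1$ and hence $\lambda=0$. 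You should either compute this logarithmic coefficient $\lambda(\lambda+1)/C^4$ (which replaces your entire symmetric-square discussion and makes the accessory-parameter worry moot), or else actually perform the finite symmetric-square check you outline; until one of these is done, the statement $G=SL_2(\mathbb{C})$ for generic $(C,\lambda)$ is unproven. A minor further point: the theorem's table has no ``$\lambda\in\mathbb{C}$'' row --- you appear to be importing the $(C,H)=(0,0)$ line from the introduction's table, which is not part of Theorem~\ref{thm5}.
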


\begin{proof}
We begin by the case $C\notin\{0,1,\sqrt{2}\}$. The equation \eqref{eq7} has exactly $4$ regular singularities on
$$0,1,\frac{C^2}{2-C^2},\infty$$
We make Frobenius expansion on these $4$ singularities, and we find a logarithmic term for $t=0$ and for $t=\infty$. More precisely, we get
\begin{align*}
X(t)=c_1t^2\left(1-\frac{\lambda-2}{3C^2}t+\frac{\lambda^2-11\lambda+18+6C^4-12C^2}{24C^4}t^2+O(t^3) \right)+\\
c_2\left(\ln\;t\left(\frac{\lambda+\lambda^2}{C^4}t^2+O(t^3)\right)-2-\frac{2}{C^2}\lambda t+O(t^2) \right)\\
X(t)=c_1\left(1+\frac{\lambda}{(C^2-2)t}+O(t^{-2})\right)+\\
c_2\left(t\ln\left(\frac{1}{t}\right)\left(\frac{1+\lambda}{(C^2-2)t}+O(t^{-2})\right)+t\left(1-\frac{1}{(C^2-2)t}+O(t^{-2})\right)\right)
\end{align*}
These expansions are valid for $\lambda\neq -1,0$. In the case $\lambda=-1$, we can compute explicitly the solutions and we find
$$X(t)=c_1(t-C^2)+c_2\sqrt{(t-1)(tC^2-2t+C^2)}$$
The Galois group is then $\mathbb{Z}/2\mathbb{Z}$, abelian. In the case $\lambda=0$, we find the solution
\begin{align*}
X(t)=c_1+c_2\left(\sqrt{(t-1)(tC^2-2t+C^2)}\right)-\\
\frac{c_2}{\sqrt{C^2-2}}\ln\left(\frac{(C^2-2)t+1}{\sqrt{C^2-2}}+\sqrt{(t-1)(tC^2-2t+C^2)} \right)
\end{align*}
The identity component of the Galois group is then $\mathbb{C}$, abelian. Let us consider the case $\lambda\neq -1,0$. Among the  $3$ solvable cases of Kovacic's algorithm, the only possible one with a logarithmic term is when there is a solution of the form
$$X(t)=\exp\left( \int F(s) ds\right)\qquad F\in\mathbb{C}(t)$$
If $F$ has singularities of order more than $2$ then $X$ does not have a Puiseux expansion near this singularity. Impossible because all singularities are regular. If the degree of $F$ is positive, then the expansion at infinity is not a Puiseux series. Then the particular solution $X(t)$ should be of the following form
$$X(t)=\prod\limits_{i=1}^k (t-t_i)^{m_i}$$
If $m_i$ is not a non-negative integer, then $t_i$ is a singularity of $X$ then $t_i$ equals to one of the singularities of the equation. This give even more constraints on the $m_i$ because the Frobenuis exponents on $1,\frac{C^2}{2-C^2}$ are $0,1/2$. On $0$, the possible exponent is $2$, and on infinity it is $0$ (the other ones correspond to the logarithmic behavior). This imply that the sum of the $m_i$ be zero. But the $m_i$ can never be negative, then they are all zero. The only left possibility is then $X(t)=1$. We replace and we find $\lambda=0$, case already done. Then the Galois group is $SL_2(\mathbb{C})$.

\bigskip

The cases $C\in\{0,1,\sqrt{2}\}$ correspond to confluences. These confluences are all regular (this has probably something to do with the fact that the system comes from a variational equation of a Hamiltonian system). The case $C=0$ has already been treated by Morales, Ramis, Yoshida \cite{1},\cite{4}. Let us study the case $C=\sqrt{2}$. This corresponds to the parabolic case (some study of this case has already been done by Tsygvintsev \cite{9}). The equation \eqref{eq7} becomes
$$2t(t-1)\ddot{X}-(t-2)\dot{X}=\lambda X$$
There is a logarithmic term for the singularity $t=0$
\begin{align*}
X(t)=c_1t^2\left(1+\left(\frac{1}{3}-\frac{1}{6}\lambda\right)t+O(t^2) \right)+\\
c_2\left(ln\;t\left(\left(\frac{1}{4}\lambda^2+\frac{1}{4}\lambda\right)t^2+O(t^3)\right)-2-\lambda t-\left(\frac{1}{2}\lambda+\frac{1}{4}\right)t^2+O(t^3)\right)
\end{align*}
for $\lambda\neq 0,-1$. In the cases $\lambda\in\{0,-1\}$, we find the solutions
$$X(t)=c_1+c_2\sqrt{t-1}(2+t)\qquad\quad X(t)=c_1(t-2)+c_2\sqrt{t-1}$$
The Galois group is then $\mathbb{Z}/2\mathbb{Z}$ in both cases, then abelian. We now look at the case $\lambda\neq 0,-1$. The possibles exponents are $\{2\}$ at $0$, $\{0,1/2\}$ at $1$ and
$$\left\lbrace-\frac{3}{4}+\frac{1}{4}\sqrt{8\lambda+9},-\frac{3}{4}-\frac{1}{4}\sqrt{8\lambda+9}\right\rbrace$$
at $\infty$. As before, we prove that we need a solution of the form
$$X(t)=\prod\limits_{i=1}^k (t-t_i)^{m_i}$$
The possibles exponents outside infinity are always integers or half integers, all non-negative, then the sum of the $m_i$ is a non-negative integer or half integer. Then
$$-\frac{3}{4}+\frac{1}{4}\sqrt{8\lambda+9}=\frac{1}{2} (k-1) \qquad k\in\mathbb{N}^*$$
We solve this equation and we find
$$\lambda=\frac{1}{2}(k-1)(k+2)\qquad k\in\mathbb{N}^*$$
This is exactly the condition of Theorem~\ref{thm5}. We now want to compute the Galois group for these remaining cases. We write the solutions of the equation in the following form (it is a hypergeometric equation, and the solutions can be written using hypergeometric series $\vphantom{H}_2F_1$)
\begin{align*}
X(t)=c_1\;\vphantom{H}_2F_1\left(\left[1-\frac{1}{2}k,\frac{1}{2}k+\frac{3}{2}\right],\left[\frac{1}{2}\right],-t+1\right)t^2+\\
c_2\;\vphantom{H}_2F_1\left(\left[2+\frac{1}{2}k,\frac{3}{2}-\frac{1}{2}k\right],\left[\frac{3}{2}\right],-t+1\right)\sqrt{t-1}t^2
\end{align*}
These hypergeometric series are finite if the first bracket in $\vphantom{H}_2F_1$ contains a non-positive integer. For $k\geq 2$, we see that either $1-\frac{1}{2}k$ or $\frac{3}{2}-\frac{1}{2}k$ is a non-positive integer. Then one of the two functions is a polynomial. We always have a solution in $\mathbb{C}[t,\sqrt{t-1}]$, and then the identity component of the Galois group is abelian.

\bigskip

\noindent
Let us now study the case $C=1$. The equation \eqref{eq7} becomes
$$-t(t-1)^2\ddot{X}-(t-1)\dot{X}=\lambda X $$
The expansion on $0$ is the following
\begin{align*}
X(t)=c_1t^2\left(1+\left(\frac{1}{3}-\frac{1}{6}\lambda\right)t+\left(\frac{1}{96}\lambda^2-\frac{11}{96}\lambda+\frac{3}{16}\right)t^2+O(t^3)\right)+\\
c_2\left(\ln\;t\left(\left(\frac{1}{4}\lambda^2+\frac{1}{4}\lambda\right)t^2+O(t^3)\right)-2-\lambda t-\left(\frac{1}{2}\lambda+\frac{1}{4}\right)t^2+O(t^3)\right)
\end{align*}
and possess a logarithmic term for $\lambda\neq 0,-1$. The expansion at infinity is
\begin{align*}
X(t)=c_1\left(1-\frac{\lambda}{2t}+\frac{\lambda (\lambda-5)}{12t^2}+O(t^{-3})\right)+\\
c_2\left(\ln\; t \left(\lambda+1-\frac{\lambda(\lambda+1)}{2t}+O(t^{-2})\right)+t+1-\frac{4+11\lambda+3\lambda^2}{4t}+O(t^{-2})\right)
\end{align*}
Then there is always at least one logarithmic term for $\lambda\neq -1$. Remark that we already now that this equation has an abelian Galois group for $\lambda=0,-1$ (either using the limiting process of the generic solution for all angular momentum, or running Kovacic's algorithm for these specific cases). So now we will suppose that $\lambda\neq 0,-1$. We know that if the Galois group is not $SL_2(\mathbb{C})$, then it exists a solution of the form
$$X(t)=\exp\left( \int F(s) ds\right)\qquad F\in\mathbb{C}(t)$$
The equation is Fuchsian and then $X(t)$ can be written
$$X(t)=\prod\limits_{i=1}^k (t-t_i)^{m_i}$$
The $m_i$ need to be non-negative integers except maybe at singularities. The exponents at $1$ are $+\sqrt{-\lambda},-\sqrt{-\lambda}$. Then one of the following equation is satisfied
$$2+\sqrt{-\lambda}+k=0 \quad\hbox{or} \quad2-\sqrt{-\lambda}+k=0 \qquad k\in\mathbb{N}$$
Then
$$\lambda=-(k+2)^2 \quad k\in\mathbb{N}$$
We add the cases $\lambda=0,-1$ and this gives exactly the condition given by Theorem~\ref{thm5}. We now need to compute the Galois group for these specific cases.  We write the solutions of the equation in the following form (it is a hypergeometric equation, and the solutions can be written using hypergeometric series)
\begin{align*}
X(t)=c_1\;\vphantom{H}_2F_1\left(\left[2-i\sqrt{\lambda},1-i\sqrt{\lambda}\right],\left[1-2i\sqrt{\lambda}\right],1-t\right)t^2(t-1)^{-i\sqrt{\lambda}}+\\
c_2\;\vphantom{H}_2F_1\left(\left[1+i\sqrt{\lambda},2+i\sqrt{\lambda}\right],\left[1+2i\sqrt{\lambda}\right],1-t\right)t^2(t-1)^{i\sqrt{\lambda}}
\end{align*}
These hypergeometric series are finite if the first bracket in the hypergeometric series $\vphantom{H}_2F_1$ contains a non-positive integer. We see that for $\lambda=-k^2\;\; k\in\mathbb{N}^*$, it is the case for the solution in $c_1$. There is always a polynomial solution and then the Galois group is always abelian. Remark that such a work can also be done using Kimura classification of hypergeometric equation which are solvable by quadrature in \cite{14}.

\end{proof}

\section{Algebraic potentials}

In the following sections, we will often need to consider algebraic potentials instead of meromorphic ones. This is a problem because Theorem~\ref{thm:Morales} deals only with meromorphic potentials. This problem is often not addressed, except in Ziglin \cite{15}, but in fact his procedure does not work. This is because making cuts in the complex plane does not allow after to make all possible monodromy paths. Then, the monodromy group will be reduced. It could have no consequences, but here there are important consequences because we absolutely need to be able to turn around the point $0$ in the variational equation (this is because for the two other singularities, the exponents are $0,1/2$, and then if we restrict ourselves to these ones, the monodromy group will always be abelian). Let us now make a precise statement

\begin{defi} Let $w$ be an algebraic ``function''. We define the critical set by
$$\Sigma(w)=\{x,\;\;w\hbox{ is not }C^\infty \hbox{ on } x\}$$
\end{defi}

\begin{thm} \label{thm:algebraic}
Consider an algebraic potential $V$ of degree $-1$ which is integrable with first integrals meromorphic in the positions, impulsions, and $V$, and a non degenerate Darboux point $c$ outside the critical set of $V$. Then the identity component of the Galois group of the variational equation along the homothetic orbit associated to $c$ is abelian at all order. It is also the case for an elliptic orbit if it exists under the conditions of Theorem~\ref{thm1}.
\end{thm}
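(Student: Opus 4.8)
The plan is to reduce Theorem~\ref{thm:algebraic} to the meromorphic statement of Theorem~\ref{thm:Morales} by passing to a finite branched covering of phase space on which the algebraic data become single-valued, and then to descend the conclusion by means of the following standard fact: if $K\subseteq\tilde K$ is a finite algebraic extension of differential fields with the same field of constants, then for a fixed linear system the differential Galois group over $\tilde K$ is a closed subgroup of finite index of the group over $K$, so the two groups have the same identity component. It is exactly this invariance that replaces, in a correct way, the cut construction of \cite{15}: the monodromy around $t=0$ is genuinely present in the Galois group over the covering, and passing between $K$ and $\tilde K$ only discards a finite-index part, which never affects the identity component.

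First I would build the covering. Since $V$ is algebraic of degree $-1$, both $V$ and the force $-\nabla V$ are algebraic functions of $q$; let $\sigma:\mathcal N\to\mathbb C^n$ be a finite branched cover of the configuration space (the normalisation of the graph of $V$ and of the components of $\nabla V$) on which $V\circ\sigma$ and $\nabla V\circ\sigma$ become meromorphic, and set $\tilde{\mathcal M}=\mathcal N\times\mathbb C^n_p$ with projection $\pi:\tilde{\mathcal M}\to\mathbb C^{2n}$. Then $\pi$ is étale away from the branch locus, the Hamiltonian $H=\tfrac12\lVert p\rVert^2+V$ and its vector field $X_H$ lift to meromorphic objects on $\tilde{\mathcal M}$, and the pullback $\pi^{*}\omega$ of the canonical symplectic form makes the lift Hamiltonian; the assumed first integrals, being meromorphic in $(p,q,V)$, pull back to meromorphic first integrals, which remain in involution and generically independent because $\pi$ is a local biholomorphism. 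Thus $\tilde{\mathcal M}$ carries a genuinely meromorphically integrable Hamiltonian system.

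Next I would lift the orbit and apply the meromorphic theorem. Because $c$ is outside the critical set of $V$ and this set is a cone, the whole homothetic orbit (respectively the elliptic orbit of Theorem~\ref{thm1}) avoids the branch locus for generic time and lifts to an orbit $\tilde\Gamma\subset\tilde{\mathcal M}$ along which non-degeneracy is preserved; Theorem~\ref{thm:Morales} then gives that the identity component of the Galois group of the variational equation along $\tilde\Gamma$, and of the higher variational equations (hence ``at all order''), is abelian. The crucial observation is that the variational equation is literally the same linear system downstairs and upstairs: its coefficients already lie in the base field $K$ of the orbit $\Gamma$ (for the homothetic orbit $\nabla^2V(q)=\phi^{-3}\nabla^2V(c)$, so $K=\mathbb C(t)$ as in \eqref{eq5}; for the elliptic orbit $K=\mathbb C(\phi,\sqrt{2E-C^2\phi^{-2}+2\phi^{-1}})$ as in Theorem~\ref{thm1}). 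The base field $\tilde K$ attached to $\tilde\Gamma$ is the field of meromorphic functions on the curve $\tilde\Gamma$, which is a finite extension of $K$ because $\tilde\Gamma\to\Gamma$ is a finite covering of curves; and since $\tilde\Gamma$ is an algebraic curve over $\mathbb C$, its function field has $\mathbb C$ as field of constants, so no new constants appear.

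Applying the invariance of the identity component from the first paragraph to the single equation regarded over $K\subseteq\tilde K$, I conclude that its Galois group over $K$ has the same identity component as over $\tilde K$, hence is abelian, and likewise at all orders; the same reasoning with the elliptic base field settles the last assertion. The part I expect to require the most care is the first step: one must set up $\tilde{\mathcal M}$ so that $\pi^{*}\omega$ is honestly symplectic along $\tilde\Gamma$ (not merely Poisson), that the field of constants is unchanged so that the comparison of Galois groups is legitimate, and that the independence of the lifted first integrals survives on the covering. Once the finiteness of $\tilde K/K$ is secured, the descent of the identity component is automatic and resolves the monodromy difficulty raised for algebraic potentials.
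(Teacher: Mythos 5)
Your strategy is genuinely different from the paper's, and its engine --- that a finite extension of differential fields with the same constants only replaces the Galois group by a finite-index subgroup, hence leaves the identity component untouched --- is exactly the tool the paper itself uses at the very end, where it observes that the coefficients land in $\mathbb{C}\left(\phi,\sqrt{2E-C^2\phi^{-2}+2\phi^{-1}}\right)$, a degree-$2$ extension of $\mathbb{C}(\phi)$. Where you globalize (normalize the graph of $V$ over configuration space, lift the whole Hamiltonian system, apply Morales--Ramis upstairs, descend), the paper stays local to the orbit: it checks directly that the first integrals and their jets restrict meromorphically along the curve, and that the resulting coefficient field is the right finite extension of $\mathbb{C}(\phi)$. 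Your route is cleaner conceptually and correctly diagnoses why the cut construction of \cite{15} fails (cuts kill monodromy outright; a finite cover only passes to a finite-index subgroup).

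There is, however, a gap at the one point the paper identifies as the crux. The covering $\sigma:\mathcal N\to\mathbb C^n$ cures the multivaluedness of $V$ over configuration space, but the delicate singularity is not a branch point of $V$: it is the total collision $\phi=0$, where $p=c\dot\phi\to\infty$, so the closure of your lifted orbit $\tilde\Gamma$ leaves $\tilde{\mathcal M}=\mathcal N\times\mathbb C^n_p$ entirely. Meromorphy of the lifted first integrals on $\tilde{\mathcal M}$ therefore says nothing about the behaviour of the coefficients of their initial forms at $t=0$; if those coefficients fail to be meromorphic there, the Galois-theoretic invariance argument only constrains the group over a field that excludes $t=0$, and one loses precisely the monodromy around $0$ --- the paper stresses that the two other singularities have exponents $0,1/2$ and by themselves always give an abelian monodromy group, so the whole content of the theorem sits at $t=0$. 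The paper closes this by a homogeneity computation, $\partial^k V(c\,\phi(t))=\beta\,\phi(t)^{-1-k}$, which forces the initial-form coefficients to be meromorphic at $\phi=0$, and by regularity at infinity concludes they are rational in $\phi$ (up to the quadratic extension). You would need to import this step --- e.g.\ by checking that the weighted dilation $(p,q)\mapsto(\alpha p,\alpha^{-2}q)$ lifts to $\tilde{\mathcal M}$ (it does, since the graph of a homogeneous $V$ is invariant under it) and rerunning the weight-homogeneous initial-form argument upstairs --- before invoking Theorem~\ref{thm:Morales} on the cover. As written, you have flagged the symplectic lift and the constants as the delicate points, but the real work is at the collision, and your proposal does not address it.
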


\begin{proof}
There are two important arguments to apply Morales Ramis theorem
\begin{itemize}
\item The first integral need to have an expansion in series (or the quotient of two series in the meromorphic case) along the curve
\item The coefficients of this expansion will be functions of the time $t$, and the corresponding field will be the base field to be considered in Galois group computation.
\end{itemize}
We have that $V$ is homogeneous, then so is the critical set $\Sigma(V)$. If $c\notin \Sigma(V)$, then the whole orbit $(c.\dot{\phi},c.\phi)$ is not in $\Sigma(V)$, except maybe for $\phi=0$. It is also the case for an elliptic orbit using the notation~\eqref{orbitelliptic} in proof of Theorem~\ref{thm1}. So, a first integral meromorphic in the positions, impulsions, and $V$, is meromorphic everywhere on the particular orbit except possibly for $\phi=0$. Let us prove now that the coefficients of an initial form of such first integral have coefficients meromorphic in $\phi,\sqrt{2E-C^2\phi^{-2}+2\phi^{-1}}$.

Consider first the homothetic orbit $\mathcal{O}$. To compute such an initial form, we need to derivate and evaluate the first integral $I$ on $\mathcal{O}$. We can write by definition
$$I(p,q)=f(p,q,V(q)) \qquad \hbox{ with } f \hbox{ meromorphic}$$
The problem is of course the derivation in $V$ on $\mathcal{O}$. We know it is derivable everywhere on $\mathcal{O}$ except maybe for $\phi=0$. But we also have that the derivatives of $V$ are homogeneous functions. So we have
$$\partial^k V(c.\phi(t))=\beta \phi(t)^{-1-k} \qquad \beta\in\mathbb{C}$$
where $\partial^k$ correspond to a derivation of order $k$ (and $\beta$ depend of course of the chosen derivation). We see that even if there is a singularity on $\phi=0$, the function is still meromorphic on $\phi=0$. The coefficient $\beta$ is well defined because the derivative is well defined for $\phi\neq 0$. In the case of elliptic orbits, the potential is invariant by rotation on the plane generated by the elliptic orbit. So the only problematic point is also $(0,0)$. As given in Theorem~\ref{thm1}, the functions $\cos(\theta),\sin(\theta)$ of the angle are functions meromorphic in $\phi,\sqrt{2E-C^2\phi^{-2}+2\phi^{-1}}$ and then, so are the coefficients of the initial form.

\bigskip

To conclude, we now need to precise that, as Morales and Ramis using Kimura table \cite{14} have done, we are in fact computing Galois group over the base field $\mathbb{C}(\phi)$, not on the field of meromorphic functions. This is not a problem here because in all cases, the variational equation is regular at infinity, and then so are the first integral. We know that a meromorphic function on $\bar{\mathbb{C}}$ is in fact rational, so the coefficients of the initial form are in fact in
$$\mathbb{C}\left(\phi,\sqrt{2E-C^2\phi^{-2}+2\phi^{-1}}\right)$$
This is just an extension of degree $2$ of the field $\mathbb{C}(\phi)$, so the identity component of the Galois group will be the same.
\end{proof}

\section{The case of dimension $3$}

\noindent
In the particular case of dimension $3$, we get

\begin{thm}\label{thm6}
Let $V(x,y,z)$ be a potential meromorphic in $\sqrt{x^2+y^2},$\\ $\sqrt{x^2+y^2+z^2},z^2$ and homogeneous of degree $-1$ in dimension $3$ (this imply that the symmetry group of $V$ contains $\mathbb{Z}/2\mathbb{Z} \times \mathbb{O}_2$). Suppose that $V(1,0,0)\neq 0,\infty$. If $V$ is meromorphically integrable, then it belongs to one of the following families
\begin{equation}\label{eq8}
V=\frac{a}{\sqrt{x^2+y^2+z^2}}\;\;a\in\mathbb{C}^*\qquad V=\frac{b}{\sqrt{x^2+y^2}}\;\;b\in\mathbb{C}^*
\end{equation}
\end{thm}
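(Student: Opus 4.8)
The plan is to extract from the single rotationally distinguished Darboux point the eigenvalue condition of Theorem~\ref{thm5}, and then to upgrade that one scalar constraint to the global form of $V$ using the remaining Darboux points. First I would use the $\mathbb{O}_2$-symmetry (rotations in the $x,y$ variables) together with homogeneity of degree $-1$: on the plane $P=\{z=0\}$ the potential can only depend on $r=\sqrt{x^2+y^2}$, so $V|_P=a/r$ with $a=V(1,0,0)\neq 0,\infty$. Hence $(1,0,0)$ is a Darboux point of multiplier $-a$, and after the homogeneous rescaling $\hat c=a^{1/3}(1,0,0)$ one gets a Darboux point of multiplier $-1$ with $\lVert\hat c\rVert^2=a^{2/3}\neq 0$ lying in the equator of $G$, with associated plane $P$.

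Next I would analyse the Hessian $\nabla^2V(\hat c)$. The reflection $z\mapsto -z$ and the rotations about the $z$-axis fix the line $\langle e_z\rangle$, so $E_z=\langle e_z\rangle$ is an eigenspace invariant under $\tilde{G}$, exactly the situation required by Theorem~\ref{thm4}. Writing $V=\rho^{-1}h(r/\rho)$ with $\rho=\sqrt{x^2+y^2+z^2}$, a short Euler-relation computation gives the radial eigenvalue $2$, the in-plane tangential eigenvalue $-1$ (the Kepler value, harmless since $V|_P$ is Kepler), and
$$\lambda_z=-1-\frac{h'(1)}{h(1)}$$
for the $z$-eigenvalue. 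Since $V$ is only algebraic in $(x,y,z)$, I would invoke Theorem~\ref{thm:algebraic} to legitimately apply Morales--Ramis along these orbits. Then Theorem~\ref{thm4} applied to $E=E_z$ shows that meromorphic integrability forces equation~\eqref{eq5} with $\lambda=\lambda_z$ to have abelian identity component; as integrability must hold for the whole conic family, the normalisation of Theorem~\ref{thm5} and its table (taking generic $C\notin\{0,1,\sqrt2\}$) force $\lambda_z\in\{0,-1\}$, i.e. $h'(1)\in\{0,-h(1)\}$. These are precisely the boundary values realised by $h\equiv a$ (giving $V=a/\rho$, with $\lambda_z=-1$) and by $h(w)=a/w$ (giving $V=b/r$, with $\lambda_z=0$).

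It remains to promote this single condition at $w=1$ to the global identity $h\equiv a$ or $h=a/w$. Here I would bring in the other Darboux points of $V$: every zero $w_0$ of $h'$ yields, by the rotational symmetry, an off-equatorial Darboux point $\hat c_0$, whose homothetic orbit is a particular solution to which the classical criterion (Theorem~\ref{thm:Morales}, in the algebraic form of Theorem~\ref{thm:algebraic}) applies, forcing the whole spectrum of $\nabla^2V(\hat c_0)$, computable in terms of $h(w_0)$, $h''(w_0)$ and the azimuthal curvature, into $\{\frac{1}{2}(k-1)(k+2),\,k\in\mathbb{N}\}$. Translating these spectral conditions into relations between $h$ and its derivatives along the critical locus, together with the boundary data at $w=1$ and the meromorphy of $h$, should cut the admissible $h$ down to the two stated functions.

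The hard part will be this last, global step, and in particular the potentials whose $h'$ has no zero away from $w=1$: for them no off-equatorial Darboux point exists and the homothetic-orbit obstruction is silent, so the single equatorial condition $\lambda_z\in\{0,-1\}$ is all that the Darboux-point machinery directly provides. To close this gap I would use the reduction indicated after Theorem~\ref{thm4}: fixing the angular momentum $C$ turns the system into a dimension-2 potential that is the sum of the degree $-1$ meridian potential and the centrifugal term $C^2/(2r^2)$, and I would require integrability of this reduced family for all $C$. The resulting decoupling/functional constraint on $h$ is what I expect to be genuinely delicate, and is the step where the precise analytic structure (meromorphy in $r,\rho,z^2$ and control at the singular set) must be used to exclude every $h$ except the constant and $a/w$.
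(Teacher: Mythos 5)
The first half of your argument matches the paper: the equatorial Darboux point $(1,0,0)$, the $\tilde G$-invariant eigenspace $\langle e_z\rangle$, Theorem~\ref{thm4} applied to the vertical variational equation along the conic family, and the conclusion $\partial_{zz}V(c)\in\{0,-1\}$ from the table of Theorem~\ref{thm5} for generic $C$. That part is sound and is exactly what the paper does.

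The genuine gap is your global step, and you correctly sense it yourself. Your two proposed closing moves do not work as stated: the off-equatorial Darboux points coming from zeros of $h'$ need not exist at all (you concede this), and even when they do, you never actually derive the spectral relations that would pin down $h$; the reduction by angular momentum produces a non-homogeneous potential $\tilde V + C^2/(2r^2)$ for which no integrability classification is invoked, so nothing is concluded from it. The paper closes the argument differently and much more cheaply: since $V$ is invariant under rotation about the $z$-axis, the meridian plane $\tilde P:\ y=0$ is an invariant plane, and the restriction $\tilde V(x,z)$ (choosing a branch of $\sqrt{x^2+y^2}|_{y=0}=x$) is again a homogeneous potential of degree $-1$ in dimension $2$ with a non-degenerate Darboux point at $(1,0)$. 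The eigenvalue constraint $\partial_{zz}V(c)\in\{0,-1\}$ obtained in the first half reduces the problem to finitely many eigenvalues, which is precisely the regime where the classification of integrable planar homogeneous potentials of degree $-1$ in~\cite{13} applies; that classification, pulled back through the symmetry group, yields directly the two families of~\eqref{eq8}. Without this appeal to the planar classification (or an equivalent global argument), your proof does not determine $h$ away from $w=1$ and therefore does not establish the theorem.
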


This theorem is almost the best we can have (with a reasonable statement). To apply our previous theory, we need an invariant plane on which the potential is invariant by rotation. Such invariant plane comes here from the symmetry in $z$. The constraint $V(1,0,0)\neq \infty$ cannot be removed, but the constraint $V(1,0,0)\neq 0$ could maybe be removed with a lot of additional work. There are two keys which allow us to give such a complete statement, which are the fact that the decoupling condition is always satisfied, and then that the potential can be reduced on a plane for which an almost complete classification is already done in \cite{13} (for a finite number of eigenvalues).

\begin{proof}
The potential $V$ possess a symmetry group $G$ such that
$$G\supset \left<\left( \begin {array}{ccc} cos(\theta)&-sin(\theta)&0\\sin(\theta)&cos(\theta)&0\\0&0&1 \end {array} \right),\left( \begin {array}{ccc}1&0&0\\0&1&0\\0&0&-1 \end {array} \right) \right> $$
We consider $P$ the plane $z=0$. This is an invariant plane because $\partial_z V(x,y,0)$ $=0$ thanks to parity in $z$. Using the hypotheses, the restriction of $V$ to the plane $P$ is not zero or infinite. The point $c=(1,0,0)$ is then a non degenerated Darboux point, and $c$ is in the equator of $G$ because $c\in P$ and $G.c\supset P$. The matrix $\nabla^2V(c)$ contains a stable subspace of dimension $2$ associated to $P$. Then the supplementary space generated by the vector $(0,0,1)$ is also an eigenspace. The rotation group generated by the rotations around the $z$-axis let invariant the vector $(0,0,1)$. The conditions of Theorem \ref{thm4} are satisfied and the "vertical" (normal to the plane $P$) variational equation is then
$$t(-C^2+2t+2Et^2)\ddot{X}+(-t+C^2)\dot{X}=\partial_{zz} V(c) X$$
This equation is integrable for all angular momentum $C$ only if
$$\partial_{zz} V(c)\in\{0,-1\}$$
with $c$ with multiplier equal to $-1$. We now restrict our potential to the plane $\tilde{P}:\; y=0$. The potential $V$ is invariant by rotation around the $z$-axis, then $\tilde{P}$ is an invariant plane and we consider the restriction $\tilde{V}:\tilde{P}\mapsto \bar{\mathbb{C}}$.\\
The restriction of the function $\sqrt{x^2+y^2}$ to $y=0$ gives a bivaluated function whose values are $+x$ or $-x$. We can choose one or another for the restriction $\tilde{V}$ (because $\tilde{V}$ should be integrable for both possibilities anyway), and we choose arbitrary
$$\sqrt{x^2+y^2}\mid_{y=0}=x$$
The function $\tilde{V}(x,z)$ is then meromorphic in $x,\sqrt{x^2+z^2},z$. It possess a Darboux point $c=(1,0)$, and it is non degenerated. Using \cite{13}, and then applying the symmetry group, we find that if $V$ is meromorphically integrable, then it should be of the form \eqref{eq8}. These potentials effectively possess an additional first integral, $I=p_z x-p_x z$ and $I=p_z$ respectively, and they are independent with energy and angular momentum almost everywhere.
\end{proof}

\begin{rem}
We can see here the importance of the symmetry group structure in the study of integrability. Here, the vertical variational equation is simple because in dimension $3$, a group of rotations (except $\mathbb{SO}_3$) always possess an eigenvector. This is no more the case in dimension $4$ and higher. In particular, the complexity of the variational equation is very linked to the symmetry group, and if it is too complicated, we will need additional properties on the matrix $\nabla^2 V(c)$. As we will see after, in the $n$ body problem, an explicit decoupling condition appear because the symmetry group contains the rotations
$$R_{\theta}=\left( \begin {array}{cc} \cos\theta I_n&-\sin\theta I_n\\\sin\theta I_n&\cos\theta I_n\\ \end {array} \right)$$
and this group do not possess a common eigenvector of eigenvalue $1$.
\end{rem}

Now we can ask if the Theorems \ref{thm4},\ref{thm5} are really ``useful'' and not only purely theoretical possibilities with no examples. Does exist effectively some potentials that would be integrable only for a specific value of energy and angular momentum? Using Hietarinta \cite{7} direct method and then our non integrability approach, we find the following potentials

\begin{thm}\label{thm7}
We consider the potentials
\begin{equation}\label{eq13}
V_1=\frac{\sqrt{x^2+y^2}}{x^2+y^2-z^2} \qquad V_2=\frac{x^2+y^2+z^2}{(x^2+y^2)^{3/2}}
\end{equation}
The potential $V_1$ is integrable for zero angular momentum $C=0$, but not on any other hypersurface $C^2H=\alpha,\;\;\alpha\in\mathbb{C}^*$ (the question about integrability on $H=0$ is still open).\\
The potential $V_2$ is integrable on the hypersurface $H=0$ of zero energy, but not on any other hypersurface $C^2H=\alpha,\;\;\alpha\in\mathbb{C}^*$, nor on the hypersurface $C=0$ of zero angular momentum.
\end{thm}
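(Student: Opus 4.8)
The plan is to prove each of the two assertions by separating, for each potential, the positive direction (integrability on the distinguished level) from the negative direction (an obstruction to integrability elsewhere coming from Theorems~\ref{thm4} and~\ref{thm5}). Both potentials in~\eqref{eq13} are homogeneous of degree $-1$ and depend only on $x^2+y^2$ and $z$, hence are invariant under the group $G$ generated by the rotations about the $z$-axis and the reflections $z\mapsto -z$, $(x,y)\mapsto(-x,-y)$. The plane $P=\{z=0\}$ is an equator of $G$ on which each potential restricts to the Kepler potential $(x^2+y^2)^{-1/2}$, so $c=(1,0,0)$ is a Darboux point with $\nabla V(c)=-c$ (multiplier $-1$) and $\lVert c\rVert^2=1\neq0$. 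Since the subgroup $\tilde G$ of rotations fixing $P$ pointwise fixes the $z$-axis, the line $E=\mathbb{C}(0,0,1)$ is an eigenspace of $\nabla^2V(c)$ invariant by $\tilde G$, so Theorem~\ref{thm4} applies with the transverse eigenvalue $\lambda=\partial_{zz}V(c)$.

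For the negative direction I would first compute $\lambda$: a direct differentiation gives $\partial_{zz}V_1(1,0,0)=\partial_{zz}V_2(1,0,0)=2$, so $\lambda=2$ in both cases. To reach the normal form of Theorem~\ref{thm5} I would invoke Theorem~\ref{thm3} together with the scaling $\varphi$, which permutes the leaves of the hypersurface $C^2H=\alpha$, so that integrability on the hypersurface is equivalent to integrability on a single leaf $(H,C)=(H_0,C_0)$ with $C_0^2H_0=\alpha$; I choose the leaf with $H_0=\tfrac12 C_0^2-1$, which exists for every $\alpha$ because $C_0^2=1\pm\sqrt{1+2\alpha}$ solves $C_0^2(\tfrac12C_0^2-1)=\alpha$. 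For $\alpha\neq 0,-\tfrac12$ both roots give $C_0\notin\{0,1,\sqrt2\}$, so Theorem~\ref{thm5} forces $\lambda\in\{0,-1\}$; for $\alpha=-\tfrac12$ one is pushed to $C_0=1$, forcing $\lambda\in\{-k^2,\,k\in\mathbb{N}\}$. Since $2\notin\{0,-1\}$ and $2\notin\{-k^2\}$, integrability fails on every $C^2H=\alpha$ with $\alpha\neq0$, for both $V_1$ and $V_2$. Note that the same value $\lambda=2=\tfrac12(k-1)(k+2)|_{k=2}$ lies in the admissible set for the rows $C=0$ and $H=0$ of the table, which is exactly why those two levels escape this argument and must be handled by hand.

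For the positive directions I would exhibit the explicit first integrals located by Hietarinta's direct method. For $V_1$ at $C=0$ the motion stays in a meridian half-plane, where $\tilde V_1=\frac{x}{x^2-z^2}$; in parabolic coordinates $u=x+z,\ v=x-z$ one has $\tilde V_1=\tfrac12 u^{-1}+\tfrac12 v^{-1}$ while $\tfrac12(p_x^2+p_z^2)=p_u^2+p_v^2$, so the reduced Hamiltonian splits and $F=p_u^2-\tfrac12 u^{-1}$ is conserved; I would re-express $F$ on the whole $C=0$ level using the rotational symmetry and verify the commutation and independence required by Definition~\ref{def2}. For $V_2$ at $H=0$ I would write down the conditional integral $F$ produced by the direct search and check $\{H,F\}\in\langle H\rangle$ and $\{C,F\}=0$ in the field $K$ of Theorem~\ref{thmdefine}, together with the non-vanishing of a suitable Jacobian minor.

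The remaining and hardest point is the non-integrability of $V_2$ on $C=0$, which Theorem~\ref{thm5} cannot reach since $\lambda=2$ is admissible there. Here I would restrict to the invariant submanifold $\{y=0,\ p_y=0\}\subset\{C=0\}$, on which the dynamics is governed by the planar degree $-1$ potential $\tilde V_2=\frac{x^2+z^2}{x^3}$, so that integrability of $V_2$ on $C=0$ would restrict to integrability of $\tilde V_2$. A short computation shows that every Darboux point of $\tilde V_2$, namely $(1,0)$ and the isotropic points $(1,\pm i)$, yields Hessian eigenvalue $2$ after normalization to multiplier $-1$, so the first–order criterion of Theorem~\ref{thm:Morales} is inconclusive and the obstruction must come from the finer planar classification of \cite{13}, into whose list of integrable meromorphic degree $-1$ potentials (the Kepler families used in Theorem~\ref{thm6}) $\tilde V_2$ does not fall. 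This appeal to \cite{13} is the main obstacle, exactly as in Theorem~\ref{thm6}, since it is the only step where the one–parameter Galois analysis of this paper must be supplemented by an external classification; I expect the open status of $V_1$ on $H=0$ to reflect the same gap, as there $\lambda=2$ is again admissible and no reduction to a classified planar family is available.
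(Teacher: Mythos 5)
Your overall architecture matches the paper's: compute $\lambda=\partial_{zz}V(c)=2$ at the Darboux point $(1,0,0)$, use the table of Theorem~\ref{thm5} (via the scaling of Theorem~\ref{thm3} to reach the normalized leaf $H=\tfrac12 C^2-1$) to exclude every hypersurface $C^2H=\alpha\neq0$, and handle $C=0$ for $V_2$ by reducing to the planar potential $(q_1^2+q_2^2)/q_1^3$ and invoking the classification of \cite{13}. Your explicit check that every $\alpha\neq0$ is reachable with $C_0\notin\{0,\sqrt2\}$, and that $\alpha=-\tfrac12$ lands on $C_0=1$ where $\lambda\in\{-k^2\}$ still excludes $2$, usefully spells out what the paper leaves implicit. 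Your derivation of the $C=0$ integral of $V_1$ by separation in parabolic coordinates on the meridian half-plane is also a legitimate alternative to simply quoting the integral; its rotational lift is exactly the paper's $I_1=\frac{(xp_x+yp_y)p_z}{\sqrt{x^2+y^2}}-\frac{z}{x^2+y^2-z^2}$, obtained by replacing $p_x$ with the radial momentum.

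Two points remain genuinely incomplete. First, the positive half of the claim for $V_2$ --- integrability on $H=0$ --- is only promised (``I would write down the conditional integral $F$ produced by the direct search''), not proved; the paper exhibits $I_2=(x^2+y^2-z^2)^2p_z^2-4z(x^2+y^2-z^2)p_z(xp_x+yp_y)+4z^2(xp_x+yp_y)^2$, and without such an explicit integral together with the verifications required by Definition~\ref{def2} that assertion is unestablished. Second, in the $C=0$ step for $V_2$ you identify the list of \cite{13} with ``the Kepler families used in Theorem~\ref{thm6}'', but the relevant classification for a planar degree $-1$ potential with eigenvalue $2$ is the list \eqref{eq1}, which contains, besides $\frac{a}{q_1}+\frac{b}{q_2}$, the complex family $\frac{a(q_1^2+q_2^2)}{(q_1+\epsilon iq_2)^3}+\frac{a}{q_1+\epsilon iq_2}$; one must rule out both, the second because it cannot be real and the first by checking that no rotation $\frac{q_1^2+q_2^2}{(\cos\theta\, q_1+\sin\theta\, q_2)^3}$ of $\tilde V_2$ takes that form. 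These are finite computations rather than wrong turns, but as written the proof of the $V_2$ statements is not complete.
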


\begin{proof}
The first integral of $V_1$ is given by
$$I_1=\frac{(xp_x+yp_y)p_z}{\sqrt{x^2+y^2}}-\frac{z}{x^2+y^2-z^2}$$
The potential $V_1$ possess a Darboux point $(1,0,0)$, and the associated eigenvalue is $\lambda=2$. Using integrability table of Theorem~\ref{thm5}, we have that this value is only possible for the hypersurfaces $C=0$ and $H=0$. Then $V_1$ is not meromorphically integrable on any hypersurface of the form $C^2H=\alpha,\;\;\alpha\in\mathbb{C}^*$.\\
The first integral of $V_2$ is given by
$$I_2=(x^2+y^2-z^2)^2 p_z^2-4z(x^2+y^2-z^2)p_z(xp_x+yp_y)+4z^2(xp_x+yp_y)^2$$
The potential $V_2$ possess a Darboux point $(1,0,0)$, and the associated eigenvalue is $\lambda=2$. Using integrability table of Theorem~\ref{thm5}, we have that this value is only possible for the hypersurfaces $C=0$ and $H=0$. We know it is integrable for $H=0$. Suppose it is integrable for $C=0$. Then we could reduce the potential by rotation and we would obtain the following potential
$$\tilde{V_2}=\frac{q_1^2+q_2^2}{q_1^3}$$
This potential possess a Darboux point $(1,0)$ and the associated eigenvalue is $\lambda=2$. But in this case, it already has been proved in \cite{13} that the potential should belong to one of the following families (after rotation)
\begin{equation}\label{eq1}
V=\frac{a}{q_1}+\frac{b}{q_2}\quad a,b\in\mathbb{C}^*\quad V=\frac{a(q_1^2+q_2^2)}{(q_1+\epsilon iq_2)^3}+\frac{a}{q_1+\epsilon i q_2}\quad a\in\mathbb{C}^*,\;\epsilon=\pm 1
\end{equation}
The second case is impossible because it is always complex. For the first one, we apply a rotation to $\tilde{V_2}$ of angle $\theta$
$$\tilde{V_2}_{\theta}=\frac{q_1^2+q_2^2}{\left(\cos(\theta) q_1+\sin(\theta) q_2\right)^3}$$
and this never coincide with expression \eqref{eq1}. Then $V_2$ is not integrable on the hypersurface $C=0$.
\end{proof}

\section{Application to the $n$ body problem}
We consider $V$ the potential of the $n$ body problem in the plane
\begin{equation}\label{eq9}
V=\sum\limits_{i>j} \frac{m_im_j}{\lVert q_i-q_j \lVert }
\end{equation}
with positive masses $m_i$, $q_i\in\mathbb{C}^2$. The symmetry group is (at least)
$$\left<\left( \begin {array}{cc} \cos\theta I_n&-\sin\theta I_n\\\sin\theta I_n&\cos\theta I_n\\ \end {array} \right),\;\;\theta\in\mathbb{C}\right>$$
Let $c$ be a Darboux point with multiplier $-1$ and such that $\lVert c\lVert^2\neq 0$. Then $c$ is in the equator of $G$ and we can build a conic orbit (by definition, the mutual distances between the bodies are not zero). For the following, we will pose
\begin{equation}\label{eq2}
W_{i,j}=\frac{1}{m_i}\frac{\partial^2}{\partial q_i\partial q_j} V(c)\qquad\quad W\in M_{2n}(\mathbb{C})
\end{equation}
using notation $m_{i+n}=m_i$. Remark for the following that the potential of the $n$ body problem as given by \eqref{eq9} is not reduced at all. This means in particular that the kinetic part is 
$$\sum\limits_{i=1}^n \frac{\lVert p_i \lVert ^2}{2m_i}$$
and so does not correspond exactly to the case we studied before. Still, it is almost the same and we just have to make a variable change like $p_i \longrightarrow p_i \sqrt{m_i}$. The matrix $\nabla^2 V(c)$ becomes in particular the matrix given by \eqref{eq2}.

\subsection{General properties}
\begin{defi}\label{def4}
Let $V$ be the potential of the $n$ body problem with positive masses $m_i$, $c$ a Darboux point with multiplier $-1$. We will say that the variational equation near a conic orbit is partially decoupled if it exists a non trivial vector space $\tilde{V}$ and $\lambda\in\mathbb{C}$ such that
$$Wv=\lambda v \quad \forall v\in\tilde{V}$$
and $\tilde{V}$ is stable by the rotations
$$R_\theta=\left( \begin {array}{cc} \cos\theta I_n&-\sin\theta I_n\\\sin\theta I_n&\cos\theta I_n\\ \end {array} \right)$$
\end{defi}

\begin{rem}
This definition exactly corresponds to the existence of a non trivial eigenspace satisfying Theorem~\ref{thm4}.
\end{rem}

\begin{thm}\label{thm8}
Let $V$ be the potential of the $n$ body problem with positive masses $m_i$, $c$ a Darboux point with multiplier $-1$ and $W\in M_{2n}(\mathbb{C})$ the associated matrix (given by equation \eqref{eq2}). The variational equation near a conic orbit is partially decoupled if and only if it exists a vector $v\in\mathbb{C}^{2n}\setminus \{0 \}$ and $\lambda\in\mathbb{C}$ such that
\begin{equation}\label{eq10}
W v=J^{-1} W J v= \lambda v
\end{equation}
where $J\in M_{2n}(\mathbb{C})$ is matrix of the canonical symplectic form.
\end{thm}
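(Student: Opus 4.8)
The plan is to reduce everything to the single algebraic observation that the rotation $R_\theta$ lies in the commutative algebra generated by the identity and the symplectic matrix $J$. Writing $J=\begin{pmatrix}0&I_n\\-I_n&0\end{pmatrix}$, a direct block computation gives $R_\theta=\cos\theta\,I_{2n}-\sin\theta\,J$, so that $J^2=-I_{2n}$, $J^{-1}=-J$, and in particular $R_{\pi/2}=-J$. From this I would extract the key equivalence: a subspace $\tilde{V}\subset\mathbb{C}^{2n}$ is stable under all the rotations $R_\theta$ if and only if it is stable under $J$. One direction is immediate, since each $R_\theta$ is a linear combination of $I_{2n}$ and $J$; the converse follows by specializing to $\theta=\pi/2$, because $J=-R_{\pi/2}$. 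This reformulates Definition~\ref{def4}: the variational equation is partially decoupled exactly when $W$ admits a nontrivial eigenspace that is invariant under $J$.

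For the forward implication I would start from such an eigenspace. Suppose $\tilde{V}\neq\{0\}$ satisfies $Wv=\lambda v$ for all $v\in\tilde{V}$ and $J\tilde{V}\subseteq\tilde{V}$. Pick any nonzero $v\in\tilde{V}$. Then $Wv=\lambda v$ directly, and since $Jv\in\tilde{V}$ we also have $WJv=\lambda Jv$, whence $J^{-1}WJv=J^{-1}(\lambda Jv)=\lambda v$. Thus the same $v$ satisfies $Wv=J^{-1}WJv=\lambda v$, which is exactly the condition \eqref{eq10}.

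For the converse I would build the invariant eigenspace by hand. Given $v\neq 0$ and $\lambda$ with $Wv=\lambda v$ and $J^{-1}WJv=\lambda v$, multiplying the second equation by $J$ on the left gives $W(Jv)=\lambda (Jv)$, so that both $v$ and $Jv$ are eigenvectors of $W$ for the eigenvalue $\lambda$. I then set $\tilde{V}=\mathrm{span}\{v,Jv\}$. This space is nontrivial, it is contained in $\ker(W-\lambda I_{2n})$ so that $W$ acts as $\lambda I$ on it, and it is $J$-invariant because $J(Jv)=-v\in\tilde{V}$. By the equivalence established above it is therefore stable under every $R_\theta$, so the variational equation is partially decoupled in the sense of Definition~\ref{def4}.

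There is essentially no hard step here once the identity $R_\theta=\cos\theta\,I_{2n}-\sin\theta\,J$ is in place; the only point requiring a little care is the degenerate case where $Jv$ is proportional to $v$, which happens precisely when $v$ is an eigenvector of $J$, that is $Jv=\pm i v$, and in which $\tilde{V}$ collapses to a line. This causes no trouble: a one-dimensional $J$-invariant space is still $R_\theta$-invariant, with $R_\theta v=e^{\mp i\theta}v$, so the construction goes through unchanged. The main thing to verify explicitly is simply the block identity relating $R_\theta$ and $J$, together with $J^{-1}=-J$.
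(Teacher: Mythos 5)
Your proof is correct and follows essentially the same route as the paper: both arguments rest on the observation that $J$ is (up to sign) the rotation $R_{\pi/2}$, so that stability of a subspace under all the $R_\theta$ is equivalent to stability under $J$, and the condition \eqref{eq10} says precisely that $v$ and $Jv$ are $\lambda$-eigenvectors of $W$. Your explicit identity $R_\theta=\cos\theta\,I_{2n}-\sin\theta\,J$ and the separate treatment of the degenerate case $Jv=\pm iv$ make the argument slightly cleaner than the paper's, but the substance is identical.
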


\begin{proof}
Suppose at first that $v$ is not an eigenvector of $R_{\theta}$ (these matrices commute so they have the same eigenvectors). We just have to take $\tilde{V}=(v,Jv)$ because the space generated by $R_{\theta} v,\;\forall \theta$ is a $2$-dimensional space which contains $(v,Jv)$ ($\tilde{V}$ is always $2$-dimensional because $v$ is not an eigenvector of $J=R_{\pi/2}$). Using the hypotheses, $v$ and $Jv$ are eigenvectors of $W$ with the same eigenvalue, so $\tilde{V}$ is an eigenspace of $W$ stable by the rotations $R_{\theta}$. If $v$ is an eigenvector of $R_{\theta}$, then we take $\tilde{V}=\mathbb{C}.v$ and $\tilde{V}$ is an eigenspace of $W$ stable by the rotations $R_{\theta}$.\\
Conversely, if we have an eigenspace $\tilde{V}$ stable by the rotations $R_{\theta}$, we take any vector $v\in\tilde{V}$ and it satisfy \eqref{eq10} because $J=R_{\pi/2}$ and then $Jv\in\tilde{V}$, and so it is also an eigenvector of eigenvalue $\lambda$.
\end{proof}

\begin{thm}\label{thm9}
Let $V$ be the potential of the $n$ body problem with positive masses $m_i$, $c$ a Darboux point with multiplier $-1$ and $W\in M_{2n}(\mathbb{C})$ the associated matrix (given by equation \eqref{eq2}). If the variational equation near a conic orbit is partially decoupled then the matrix $W$ of \eqref{eq2} has a double eigenvalue.
\end{thm}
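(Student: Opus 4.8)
The plan is to reduce at once to Theorem~\ref{thm8}, which tells us that partial decoupling is equivalent to the existence of $v\in\mathbb{C}^{2n}\setminus\{0\}$ and $\lambda\in\mathbb{C}$ with
\begin{equation*}
Wv=\lambda v\qquad\text{and}\qquad WJv=\lambda Jv ,
\end{equation*}
so that \emph{both} $v$ and $Jv$ belong to the $\lambda$-eigenspace of $W$. The argument then forks according to whether $v$ and $Jv$ are linearly independent. If they are, the $\lambda$-eigenspace has dimension at least $2$, hence $\lambda$ has algebraic multiplicity at least $2$ and $W$ already has a double eigenvalue.

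The delicate case is when $v$ and $Jv$ are proportional. Since $J=R_{\pi/2}$ satisfies $J^2=R_{\pi}=-I_{2n}$, its eigenvalues are $\pm i$, so $Jv=\pm i v$; in the block coordinates where $J=\bigl(\begin{smallmatrix}0&-I_n\\I_n&0\end{smallmatrix}\bigr)$ this means $v=(a,\mp i a)$ for some $a\in\mathbb{C}^n$. Here a naive count fails, since $Jv$ yields no new eigenvector, so I would exploit the extra structure of $W$. Writing $M=\mathrm{diag}(m_1,\dots,m_n,m_1,\dots,m_n)$, formula~\eqref{eq2} gives $W=M^{-1}\nabla^2V(c)$, and because $\nabla^2V(c)$ is symmetric, $W$ is self-adjoint for the non-degenerate symmetric bilinear form $\langle x,y\rangle_M=x^{\top}My$: indeed $\langle Wx,y\rangle_M=x^{\top}\nabla^2V(c)\,y=\langle x,Wy\rangle_M$. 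The crucial observation is that every eigenvector of $J$ is \emph{isotropic} for this form; for $v=(a,\mp i a)$ one finds $\langle v,v\rangle_M=a^{\top}\tilde Ma+(\mp i)^2a^{\top}\tilde Ma=0$, where $\tilde M=\mathrm{diag}(m_1,\dots,m_n)$.

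It then suffices to establish the elementary fact that a self-adjoint operator for a non-degenerate symmetric bilinear form possessing an isotropic eigenvector must have a repeated eigenvalue. I would argue by contradiction: were all eigenvalues of $W$ simple, $W$ would be diagonalizable with eigenvectors $v=v_1,\dots,v_{2n}$ attached to pairwise distinct eigenvalues $\mu_1,\dots,\mu_{2n}$, and self-adjointness would force $\langle v_i,v_j\rangle_M=0$ for $i\neq j$ through $(\mu_i-\mu_j)\langle v_i,v_j\rangle_M=\langle Wv_i,v_j\rangle_M-\langle v_i,Wv_j\rangle_M=0$; together with the isotropy $\langle v_1,v_1\rangle_M=0$ this makes $v_1$ orthogonal to the entire basis, contradicting non-degeneracy. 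Hence some eigenvalue of $W$ has algebraic multiplicity at least $2$. I expect the proportional case to be the main obstacle: the point is to recognize that it cannot be dispatched by counting eigenvectors and must instead be resolved through the self-adjointness of $W$ for the mass form together with the isotropy of the $J$-eigenspaces. This route uses only the symmetry of $\nabla^2V(c)$ and the invertibility of $M$, so it remains valid for complex central configurations $c$ and complex $\lambda$.
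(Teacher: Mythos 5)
Your proof is correct and follows the same skeleton as the paper's: reduce to Theorem~\ref{thm8}, split according to whether $v$ and $Jv$ are independent (giving an eigenspace of dimension $\geq 2$ immediately) or proportional (forcing $v$ to be an isotropic eigenvector of $J$ of the form $(a,\mp i a)$), and then rule out the second case when all eigenvalues are simple. The one genuine difference is how that last step is handled. The paper invokes Theorem 6 of its reference \cite{18} to assert that a $W$ with simple spectrum is diagonalizable in an ``orthonormal'' basis, and works with the unweighted ``norm'' $\sum v_i^2$; you instead prove the needed fact from scratch, observing that $W=M^{-1}\nabla^2V(c)$ is self-adjoint for the mass form $\langle x,y\rangle_M=x^{\top}My$, that eigenvectors for distinct eigenvalues of a self-adjoint operator are $M$-orthogonal, and that an isotropic eigenvector would then be $M$-orthogonal to a full basis, contradicting non-degeneracy. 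This buys you two things: the argument is self-contained (no external citation), and it identifies the correct bilinear form for which $W$ is actually self-adjoint — note that $W$ is \emph{not} symmetric for the plain form unless the masses are equal, so your choice of $\langle\cdot,\cdot\rangle_M$ is the right one (and the isotropy of $(a,\mp ia)$ holds for both forms, so nothing is lost). The paper's version is shorter but leans on the cited result to absorb exactly this point.
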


\begin{proof}
If $dim(\tilde{V}) \geq 2$ then by definition the matrix $W$ has a double eigenvalue. Let us consider the case $dim(\tilde{V})=1$. The corresponding vector have to be a common eigenvector of $J$ and $W$. The eigenvectors $J$ are of the form $(w,iw),\;w\in\mathbb{C}^n$. In particular, they have zero ``norm''. But if $W$ has only simple eigenvalues, then $W$ is diagonalizable and using Theorem 6 of \cite{18}, $W$ is then diagonalizable in an ``orthonormal'' basis. So if $W$ has an eigenvector with zero ``norm'', then this eigenvector is a linear combination of two eigenvectors and this implies an eigenspace of dimension greater than $2$ and then a double eigenvalue.
\end{proof}

\begin{thm}\label{thm10}
Let $V$ be the potential of the $n$ body problem with positive masses $m_i$, $c$ a Darboux point with multiplier $-1$ \textbf{such that the bodies are aligned} and $W\in M_{2n}(\mathbb{C})$ the associated matrix (given by equation \eqref{eq2}). We suppose that $W$ is diagonalizable. Then the variational equation near a conic orbit has a Galois group $G$ such that
$$G\sim\tilde{G}\hbox{ with } \tilde{G}\subset \mathbb{C}\times Sp(2)^{n-2}$$
where $Sp(2)$ is the $4$ dimensional symplectic group.
\end{thm}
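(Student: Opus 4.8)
The plan is to exploit the collinear geometry to block-diagonalize $W$ and then split the variational equation \eqref{eq6} into mutually independent two-dimensional subsystems indexed by the eigenvectors of a single $n\times n$ matrix. First I would choose coordinates so that the aligned configuration $c$ lies on the first axis and order the $2n$ coordinates as $(x_1,\dots,x_n,y_1,\dots,y_n)$, so that the symmetry rotation becomes $R_\theta=\left(\begin{smallmatrix}\cos\theta\,I_n&-\sin\theta\,I_n\\\sin\theta\,I_n&\cos\theta\,I_n\end{smallmatrix}\right)$. The reflection symmetry $y\mapsto -y$, which fixes any collinear configuration, kills the mixed derivatives $\partial^2 V/\partial x_i\partial y_j$ at $c$, so $W=\mathrm{diag}(W_{xx},W_{yy})$. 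A short computation of the second derivatives of $V=\sum_{i>j}m_im_j/\lVert q_i-q_j\lVert$ at a configuration on the first axis (where $q_i-q_j$ has vanishing second component, hence $(x_i-x_j)^2=r_{ij}^2$) gives the pointwise identity $\partial^2_{x_ix_j}V=-2\,\partial^2_{y_iy_j}V$ for all $i,j$; since the mass rescaling in \eqref{eq2} attaches the same factor to the $(i,j)$ entry of both blocks, this survives as the key relation $W_{xx}=-2A$ with $A:=W_{yy}$.

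Next I would use diagonalizability of $W$, equivalently of $A$, to write $\mathbb{C}^n=\bigoplus_k\langle w_k\rangle$ with $Aw_k=\mu_k w_k$, and form the two-dimensional subspaces $\mathcal{V}_k=\mathrm{span}\{(w_k,0),(0,w_k)\}\subset\mathbb{C}^{2n}$. Each $\mathcal{V}_k$ is invariant under $R_\theta$, which merely rotates the pair $(w_k,0),(0,w_k)$, and under $W$, which acts on it as $\mathrm{diag}(-2\mu_k,\mu_k)$; hence it is invariant under the whole operator $R_{\theta(t)}^{-1}WR_{\theta(t)}$ of \eqref{eq6}. Thus the variational equation restricted to $\mathcal{V}_k$ is an instance of \eqref{eq6} with the $2\times 2$ Hessian $\mathrm{diag}(-2\mu_k,\mu_k)$; its solution space is four-dimensional and its Galois group embeds in $Sp(2)$. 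Because the blocks $\mathcal{V}_k$ form an invariant direct sum, the full Galois group is automatically a subgroup of the product of the block Galois groups; to ensure each factor is genuinely symplectic I would invoke that $W$, being diagonalizable, diagonalizes in an orthonormal basis (Theorem~6 of \cite{18}), so the eigenspaces are mutually orthogonal for the ``norm'' and the phase-space blocks $\mathcal{V}_k\oplus J\mathcal{V}_k$ are symplectically orthogonal.

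Finally I would identify the two distinguished eigenvalues. Translation invariance gives $A\mathbf{1}=0$, where $\mathbf{1}=(1,\dots,1)$ encodes both rigid translations; and differentiating the equivariance $\nabla V(R_\theta q)=R_\theta\nabla V(q)$ together with the multiplier $-1$ yields $\nabla^2V(c)(Jc)=J\nabla V(c)=-Jc$, i.e. $Ac_x=-c_x$. So the eigenvalues $\mu=0$ and $\mu=-1$ occur, with blocks $\mathcal{V}_0$ and $\mathcal{V}_{-1}$; note that $\mathcal{V}_{-1}$ is exactly the plane carrying the conic orbit $\Gamma$ (it is spanned by the dilation direction $(c_x,0)$, of $W$-eigenvalue $2$, and the rotation direction $(0,c_x)$, of eigenvalue $-1$), while $\mathcal{V}_0$ is the center-of-mass motion. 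Both carry explicit solutions coming from the Galilean symmetries and from conservation of energy and angular momentum, so their combined identity component collapses to $\mathbb{C}$, whereas each of the remaining $n-2$ generic blocks contributes a factor $Sp(2)$. This gives $G\sim\tilde G\subset\mathbb{C}\times Sp(2)^{n-2}$.

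The main obstacle is precisely this last step: proving that the two symmetry/tangential blocks $\mathcal{V}_0\oplus\mathcal{V}_{-1}$ contribute only the single abelian factor $\mathbb{C}$ rather than a generic piece of the symplectic group. This requires writing down enough rational or algebraic solutions of the corresponding subsystems — those generated by the translations, by the explicit conic orbit and its derivatives with respect to the energy and rotation parameters — and checking that after accounting for all of them at most one quadrature survives, so that the identity component of their joint Galois group is $\mathbb{C}$. The block-decoupling of the second paragraph and the relation $W_{xx}=-2A$ are, by contrast, essentially mechanical once the collinearity is used.
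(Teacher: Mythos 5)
Your proposal follows essentially the same route as the paper: the relation $W_{yy}=-\tfrac{1}{2}W_{xx}$ (the paper's \eqref{eq12}, stated there as a ``direct computation'' which you actually justify via the reflection symmetry and the pointwise identity $\partial^2_{xx}(1/r)=-2\,\partial^2_{yy}(1/r)$ on the axis) yields the same splitting into $n$ rotation-invariant two-dimensional blocks built from a common eigenbasis of the two diagonal blocks, with the center-of-mass block and the $\mathrm{diag}(2,-1)$ orbital block singled out exactly as in the paper and the remaining $n-2$ blocks each contributing at most $Sp(2)$. The step you flag as the main obstacle --- that the two distinguished blocks together contribute only a single abelian factor $\mathbb{C}$ --- is asserted in the paper's proof with no more justification than you provide, so your write-up is, if anything, slightly more explicit.
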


\begin{proof}
For an \textbf{aligned} Darboux point, we have the following property (found by direct computation)
\begin{equation}\label{eq12}
W=\left( \begin {array}{cc} A&0\\0&-\frac{1}{2}A\\ \end {array} \right)\qquad J^{-1}W J=\left( \begin {array}{cc} -\frac{1}{2}A&0\\0&A\\ \end {array} \right)
\end{equation}
Then $W$ and $J^{-1}W J$ commute. Then it exists a common eigenvector basis of $W$ and $J^{-1}W J$. Then there exists a decomposition in space $V_i$ of dimension $2$ with the $V_i$ stable by rotations $R_\theta$. We can then write the variational equation under the following form
$$t(-C^2+2t+2Et^2)\ddot{X}+(-t+C^2)\dot{X}=R_{\theta(t)}^{-1} A_iR_{\theta(t)} X \quad i=1..n$$
with $A_i$ a $2\times 2$ matrix (we can choose $A_i$ diagonal after a basis change). Among the matrices $A_i$, there is one corresponding to the motion of the center of mass and this gives $A_1=0$. There is also a matrix corresponding to the first integrals of the energy and angular momentum, and this corresponds to $A_2=diag(2,-1)$. The other matrices do not have a priori special properties. Then the Galois group for the cases $i=1,2$ is $\mathbb{C}$, and for the others, it is at most $Sp(2)$.
\end{proof}

\begin{thm}\label{thm11}
Let $V$ be the potential of the $n$ body problem with positive masses $m_i$, $c$ an \textbf{aligned} Darboux point with multiplier $-1$ and $W\in M_{2n}(\mathbb{C})$ the associated matrix (given by equation \eqref{eq2}). The variational equation near a conic orbit is partially decoupled if and only if $\hbox{det}(W)=0$.
\end{thm}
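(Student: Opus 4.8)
The plan is to combine the reformulation of partial decoupling from Theorem~\ref{thm8} with the explicit block structure \eqref{eq12} of $W$ that holds at an \textbf{aligned} Darboux point. By Theorem~\ref{thm8}, partial decoupling is equivalent to the existence of $v\in\mathbb{C}^{2n}\setminus\{0\}$ and $\lambda\in\mathbb{C}$ satisfying \eqref{eq10}, i.e. $Wv=J^{-1}WJv=\lambda v$; so the whole claim reduces to analysing this simultaneous eigenvector condition under the constraint \eqref{eq12}.

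First I would write $v=(x,y)$ with $x,y\in\mathbb{C}^n$ according to the block decomposition of \eqref{eq12}, so that $Wv=(Ax,-\tfrac12 Ay)$ and $J^{-1}WJv=(-\tfrac12 Ax,Ay)$. Imposing $Wv=\lambda v$ gives $Ax=\lambda x$ and $Ay=-2\lambda y$, while imposing $J^{-1}WJv=\lambda v$ gives $Ax=-2\lambda x$ and $Ay=\lambda y$. Subtracting the two identities for $Ax$ yields $3\lambda x=0$, and likewise $3\lambda y=0$. Since $v\neq 0$, at least one of $x,y$ is nonzero, which forces $\lambda=0$; the four equations then collapse to $Ax=Ay=0$, so $A$ has nontrivial kernel and $\det(A)=0$. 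Thus partial decoupling implies $\det(A)=0$.

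For the converse I would take any nonzero $x\in\ker A$ and set $v=(x,0)$: then $Wv=0=J^{-1}WJv$, so \eqref{eq10} holds with $\lambda=0$ and the variational equation is partially decoupled. It remains to connect $\det(A)$ with $\det(W)$, which is immediate from the block form: $\det(W)=\det(A)\,\det(-\tfrac12 A)=(-\tfrac12)^n\det(A)^2$, so $\det(W)=0$ if and only if $\det(A)=0$. Chaining the equivalences gives partial decoupling $\iff \det(A)=0 \iff \det(W)=0$, as desired.

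The argument is essentially forced once the block relation \eqref{eq12} is in hand; the only genuine input is that relation itself, namely that at an aligned configuration the cross blocks of $W$ vanish and the normal block equals $-\tfrac12$ times the tangential block $A$. This is exactly where the alignment hypothesis and the degree $-1$ homogeneity enter, and I would regard verifying \eqref{eq12} (performed by the direct computation cited in Theorem~\ref{thm10}) as the only potentially delicate point; the subsequent linear algebra, including the forcing of $\lambda=0$, is routine.
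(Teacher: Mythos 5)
Your argument is correct and is essentially the paper's own proof: both reduce to the simultaneous eigenvector condition \eqref{eq10} via Theorem~\ref{thm8}, use the block form \eqref{eq12} to force $\lambda=-2\lambda$ and hence $\lambda=0$ and $\det(A)=0$, and exhibit an explicit kernel vector for the converse (the paper takes $v=(w,w)$ where you take $v=(x,0)$, an immaterial difference). Your write-up is in fact slightly more careful than the paper's, since it handles the cases $x=0$ or $y=0$ explicitly and spells out the identity $\det(W)=(-\tfrac12)^n\det(A)^2$ linking $\det(W)$ to $\det(A)$.
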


\begin{proof}
For an aligned Darboux point, we have the equalities \eqref{eq12}. We pose $v=(w_1,w_2)$. If $v$ is an eigenvector of $W$, then $w_1$ is an eigenvector of $A$ and $-\frac{1}{2}A$ with the same eigenvalue. Then $\hbox{det}(A)=0$. Conversely, if $\hbox{det}(W)=0$, then it exists an eigenvector $w$ of eigenvalue $0$ of $A$, and then $v=(w,w)$ is admissible.
\end{proof}

\subsection{The $3$ body problem and some specific cases}

We already know that in all cases, the matrix $W$ should have a double eigenvalue. Our approach will be the following. We search masses and Darboux points such that $W$ has a double eigenvalue. Then for the corresponding eigenvector $v$, there are two possibilities.
\begin{itemize}
\item Either $Jv$ is also an eigenvector of $W$ with the same eigenvalue. This corresponds to the case where the associated eigenspace is of dimension greater than $2$.
\item Either $v$ can be written $v=(w,iw)$, and the matrix $W$ is not diagonalizable.
\end{itemize}
For the aligned case, it is easier because we just have to look at the determinant. But in fact for the real ones, there are never zero eigenvalue if the Darboux point is real (this is due to the result of \cite{10}), so we need to look at complex cases. But even there this constraint is much stronger than expected. We find the following theorem

\begin{thm}\label{thm12}
Let $V$ be the potential of the $3$ body problem with positive masses $m_1,m_2,m_3$ such that $m_1+m_2+m_3=1$. Then $V$ possess a Darboux point such that the variational equation near a conic orbit is partially decoupled if and only if
\begin{equation}\begin{split}\label{eq11}
(m_1,m_2,m_3)=\left( \frac{1}{3},\frac{1}{3},\frac{1}{3} \right),\left( \frac{1}{7},\frac{5}{7},\frac{1}{7} \right),\\
\left(\frac{1}{4}+\frac{\sqrt{21}+\sqrt{126+42\sqrt{21}} } {84},\frac{1}{2}-\frac{\sqrt{21}}{42},\frac{1}{4}+\frac{\sqrt{21}-\sqrt{126+42\sqrt{21}}}{84}\right)
\end{split}\end{equation}
or permutation of these cases.
\end{thm}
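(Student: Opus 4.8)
The plan is to enumerate all Darboux points (central configurations) of the planar three body problem and, for each, to decide when the partial decoupling condition of Definition~\ref{def4} holds, using the criteria already established. By Theorem~\ref{thm9} a necessary condition is that the associated matrix $W$ of \eqref{eq2} have a double eigenvalue, and by Theorem~\ref{thm8} partial decoupling is equivalent to the existence of a common eigenvector $v$ of $W$ and $J^{-1}WJ$ sharing the same eigenvalue. The central configurations of three bodies come in exactly two geometric types: the Euler collinear configurations, where the bodies are aligned, and the Lagrange equilateral configuration, which exists for every choice of masses. I would treat these two types separately, since the aligned case is governed by the much simpler criterion of Theorem~\ref{thm11}. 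Throughout, the scale of each configuration is fixed so that the multiplier equals $-1$.

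First I would dispose of the collinear case. Here Theorem~\ref{thm11} reduces partial decoupling to the single scalar condition $\det(W)=0$, and by \eqref{eq12} this is equivalent to $\det(A)=0$ for the $3\times 3$ block $A$ governing the in-line direction (indeed $\det(W)=(-\tfrac12)^3(\det A)^2$). I would parametrize an Euler configuration by a ratio $s$ of mutual distances, normalized by $m_1+m_2+m_3=1$; the central configuration requirement is the classical Euler quintic $E(s,m_1,m_2,m_3)=0$. Computing $A$ explicitly as a rational function of $s$ and the masses, the equation $\det(A)=0$ becomes a second polynomial relation $D(s,m_1,m_2,m_3)=0$. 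Eliminating the configuration parameter $s$ between $E=0$ and $D=0$, for instance by a resultant, yields a polynomial condition purely in the masses. Since by the result of \cite{10} a real positive aligned configuration never makes $\det(W)$ vanish, the relevant roots correspond to complex configurations, but the elimination captures them automatically; solving and discarding extraneous factors produces the symmetric triple $(1/7,5/7,1/7)$ together with the asymmetric triple with nested radicals in \eqref{eq11}.

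For the Lagrange equilateral configuration the geometry is fixed independently of the masses, so $W$ can be written down once and for all as an explicit matrix with entries rational in $m_1,m_2,m_3$. I would compute its characteristic polynomial and impose, via Theorem~\ref{thm9}, that two eigenvalues coincide, obtaining a first polynomial condition on the masses; then, for each candidate double eigenvalue, I would test the sharper criterion of Theorem~\ref{thm8} by checking whether $W$ and $J^{-1}WJ$ admit a common eigenvector for that eigenvalue. This refinement is essential, because a bare double eigenvalue need not produce a rotation invariant eigenspace on which the equation genuinely decouples. This is where the fully symmetric mass $(1/3,1/3,1/3)$ arises, the extra $S_3$ symmetry forcing the required degeneracy.

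It remains to assemble the solutions of the two cases, to check compatibility with the normalization $m_1+m_2+m_3=1$ and with positivity where required, and to verify the converse: for each mass triple in \eqref{eq11} one must exhibit the actual Darboux point and the common eigenvector realizing the decoupling. The main obstacle is the collinear computation: coupling the Euler quintic with $\det(A)=0$ and eliminating $s$ produces a high degree equation in the masses, and the delicate point is to control the extraneous factors introduced by the resultant and to isolate the genuine solutions, in particular the triple with the doubly nested radical $\sqrt{126+42\sqrt{21}}$, which signals that the governing mass polynomial factors over $\mathbb{Q}(\sqrt{21})$ rather than over $\mathbb{Q}$.
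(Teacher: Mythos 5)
Your overall strategy (split into collinear and Lagrange types, use Theorem~\ref{thm11} resp.\ Theorems~\ref{thm9} and~\ref{thm8}, eliminate the configuration parameter against the Euler quintic by a resultant) matches the paper's, but there is a genuine gap in your enumeration of the Darboux points, and it causes you to misattribute where the third mass triple comes from. You classify the central configurations by their \emph{real} geometry: ``exactly two geometric types, Euler collinear and Lagrange equilateral, the latter fixed independently of the masses.'' In the complexified setting of the paper this is not the full list. The collinear family must be examined for every \emph{ordering} of the bodies, including the ``complex order'' in which one mutual distance picks up a sign (the paper writes the corresponding potential with a flipped sign on one term and checks that the eigenvalues of $W$ never vanish there); and the Lagrange condition over $\mathbb{C}$ is $r_1^3=r_2^3=r_3^3$, not $r_1=r_2=r_3$, which yields the additional complex configurations $(r_1,r_2,r_3)=(1,1,j)$ and $(1,1,j^2)$ with $j=e^{2i\pi/3}$ (the case $(1,j,j^2)$ being simultaneously collinear). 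It is precisely these complex Lagrange configurations that produce the asymmetric triple with the nested radical $\sqrt{126+42\sqrt{21}}$, via a double eigenvalue $1/2$ whose eigenspace is only one--dimensional, forcing the subtler branch of Theorem~\ref{thm8}: $W$ non-diagonalizable with an isotropic eigenvector of the form $(w,iw)$.

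Your claim that the collinear elimination ``produces the symmetric triple $(1/7,5/7,1/7)$ together with the asymmetric triple with nested radicals'' is therefore false: in the paper the condition $\det(W)=0$ for the standard ordering reduces to $2\rho^2+3\rho+2=0$, and the resultant with the Euler quintic gives the single quadratic form $7m_2^2-35m_1m_2-35m_2m_3+56m_1^2+63m_1m_3+56m_3^2$, whose only positive real solution on the simplex is $(1/7,5/7,1/7)$; nothing with nested radicals appears there. As written, your proof would miss the third triple (or find it in a place where it does not occur), so the ``if and only if'' is not established. To repair it you must (i) enumerate the complex central configurations, i.e.\ all sign/cube-root branches of the mutual distances compatible with the Darboux equations, and (ii) for the resulting complex Lagrange points carry out the double-eigenvalue computation and the non-diagonalizability check of Theorem~\ref{thm8}, which is where the $\mathbb{Q}(\sqrt{21})$ extension actually enters.
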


\begin{proof}
Let us begin with aligned case. After renormalization, we can take $c=(-1,0,\rho)$ with $\rho\neq 0,-1$ and we have the Euler quintic equation
\begin{align*}
L=\left( -m_1-m_2 \right) \rho^5+ \left( -3m_1-2m_2 \right) \rho^4+ \left( -3\,m_1-m_2\right)\rho^3+\\
\left( 3m_3+m_2 \right) \rho^2+ \left( 3m_3+2m_2 \right) \rho+m_2+m_3=0
\end{align*}
We search the eigenvalues of $W$, and we find that $det(W)=0$ if and only if
$$2\rho^2+3\rho+2=0$$
After taking the resultant, we have
$$Res(2\rho^2+3\rho+2,L,\rho)=7m_2^2-35m_1m_2-35m_2m_3+56m_1^2+63m_1m_3+56m_3^2$$
We want that this resultant vanish, and the only possibility for real positive masses is
$$(m_1,m_2,m_3)=\left( \frac{1}{7},\frac{5}{7},\frac{1}{7} \right) $$
We can permute the masses in the equation and this gives all the possibles permutations of this solution. But there is still a "complex order" and the corresponding potential is the following
$$V={\frac {m_{{1}}m_{{2}}}{q_{{1}}-q_{{2}}}}-{\frac {m_{{1}}m_{{3}}}{q_{{1}}-q_{{3}}}}+{\frac {m_{{2}}m_{{3}}}{q_{{2}}-q_{{3}}}}$$
The Darboux point equation leads to
\begin{align*}
L= \left( -m_{{1}}-m_{{2}} \right) {\rho}^{5}+ \left( -3\,m_{{1}}-2\,m_{{2}} \right) {\rho}^{4}+ \left( -3\,m_{{1}}+2\,m_{{3}}-m_{{2}}\right) {\rho}^{3}+\\
\left( -2\,m_{{1}}+3\,m_{{3}}+m_{{2}} \right) {\rho}^{2}+ \left( 3\,m_{{3}}+2\,m_{{2}} \right) \rho+m_{{2}}+m_{{3}}=0
\end{align*}
The eigenvalues of $W$ never vanish in this case. Let us look now at the Lagrange configuration. For complex coordinates, this corresponds to the case
$$r_1^3=r_2^3=r_3^3$$
where $r_1,r_2,r_3$ are the mutual distances between the bodies. We begin by the case $r_1=r_2=r_3$. We need a double eigenvalue and we find the condition
$$3m_2^2-3m_2m_3-3m_1m_2+3m_3^2-3m_1m_3+3m_1^2=0$$
whose only solution is
$$(m_1,m_2,m_3)=\left( \frac{1}{3},\frac{1}{3},\frac{1}{3} \right)$$
We check that the associated eigenspace of eigenvalue $1/2$ is invariant by $J$, and it is the case.\\
Let us look now at the complex cases. Among the $27-1$ possibilities lots of them are in fact the same after dilatation permutation. After these reductions, we find that there are only $3$ essentially different cases
$$(r_1,r_2,r_3)=(1,1,j),(1,1,j^2),(1,j,j^2)\qquad j=e^{\frac{2i\pi}{3}}$$
The last one is also an aligned Darboux point (it is both Lagrange and Euler configuration), and so it already has been treated.
First we search for masses such that $W$ has a double eigenvalue. We find for $(1,1,j)$ and $(1,1,j^2)$ only one real positive solution, which is the last one of \eqref{eq11}. This is the same for both Darboux points because they are conjugated. We look at the corresponding eigenspace (the double eigenvalue is $1/2$), and we find that the eigenspace is only $1$-dimensional. This is not enough for the case $dim(\tilde{V}) \geq 2$. In the case $dim(\tilde{V})=1$, we know that $W$ should be non-diagonalizable. Moreover, the eigenvector should be written $v$ $v=(w,iw)$. We check these properties and they are satisfied.
\end{proof}

\begin{rem}
The last case of \eqref{eq11} is very interesting for many reasons. We can study the variational equations and the structure of the equations is not so degenerated as in the other cases. Because of this, a more deeper analysis should be possible. For example, in \cite{8},\cite{11}, another notion of partial integrability is considered about the existence of a single additional first integral. For this last masses case, the two notions could probably be fused together to prove the non existence of a single additional first integral restricted to a single level of energy and angular momentum. This is because the variational equation on the characteristic space associated to the eigenvalue $1/2$ is simple enough to allow complete study, but is not trivial. Moreover, the fact that these masses do not possess any symmetry will avoid to consider special invariant sub manifold as the isosceles $3$ body problem in, for example, the complete search of algebraic invariant manifold for the $3$ body problem with these masses.
\end{rem}

\begin{thm}\label{thm13}
We consider $V$ the potential of the $n$ body problem in the plane with positive masses, and $c$ a real Darboux point such that there exist a rotation
$$R_\theta,\;\;\theta\notin \{k\pi ,\; k\in\mathbb{Z} \}$$
in the plane such that $R_{\theta}$ sends the configuration on itself (conserving also the masses). Then there exists a double eigenvalue and the associated eigenspace is of dimension $\geq 2$.
\end{thm}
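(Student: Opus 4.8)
The plan is to convert the geometric symmetry hypothesis into an algebraic commutation relation and then extract the degeneracy by a reality/conjugation argument. First I would encode the hypothesis: since $R_\theta$ maps the finite configuration $c$ to itself while preserving the masses, it induces a mass-preserving permutation $\sigma$ of the $n$ bodies, i.e. a permutation matrix $\Sigma=I_2\otimes P_\sigma$ acting identically on the $x$ and $y$ components, with $R_\theta c=\Sigma c$. Setting $S:=\Sigma^{-1}R_\theta$ we then have $Sc=c$. Because $R_\theta$ cyclically permutes a finite set of points, some power $R_{k\theta}$ fixes $c$ pointwise and hence equals the identity, so $e^{i\theta}$ is a root of unity; since $\theta\notin\{k\pi\}$ it is a \emph{non-real} root of unity. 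This non-reality is the only place where the hypothesis $\theta\neq k\pi$ enters, and it is the crux of the argument.

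Next I would show that $S$ commutes with $W$. The map $S$ is a symmetry of $V$: the factor $R_\theta$ leaves $V$ invariant by rotational invariance, and $\Sigma^{-1}$ leaves it invariant because $\sigma$ preserves the masses, so $V(Sx)=V(x)$. Differentiating twice and evaluating at the fixed point $c$ gives $S^{\top}\nabla^2V(c)\,S=\nabla^2V(c)$, i.e. $S$ commutes with $\nabla^2V(c)$ (using that $S$ is orthogonal). Since the mass matrix $M$ commutes with both $R_\theta=R(\theta)\otimes I_n$ and $\Sigma=I_2\otimes P_\sigma$ — they act on complementary tensor factors and $\sigma$ preserves the $m_i$ — it also commutes with $S$, and therefore $W=M^{-1}\nabla^2V(c)$ commutes with $S$. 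I would moreover record that $S$ commutes with $J=R_{\pi/2}$, because $R(\theta)$ and $R(\pi/2)$ commute on the first tensor factor.

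The core step is then a conjugation argument. Because $c$ is real, $\nabla^2V(c)$ is real and $W$ is self-adjoint for the real positive-definite mass inner product $\langle x,y\rangle_M=x^{\top}My$, so $W$ is diagonalizable with real eigenvalues. The operator $S=R(\theta)\otimes P_\sigma^{-1}$ admits the explicit eigenvector $(\mathbf 1,-i\mathbf 1)$, where $\mathbf 1\in\mathbb C^n$ is fixed by $P_\sigma$, with the non-real eigenvalue $e^{i\theta}$. Since $W$ preserves the nonzero subspace $\ker(S-e^{i\theta})$, I pick an eigenvector $v$ of $W$ inside it, so $Wv=\lambda v$ with $\lambda\in\mathbb R$ and $Sv=e^{i\theta}v$. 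Conjugating, $\bar v$ satisfies $W\bar v=\lambda\bar v$ (as $\lambda$ is real and $W$ is real) and $S\bar v=e^{-i\theta}\bar v$. Since $e^{i\theta}\neq e^{-i\theta}$, the vectors $v$ and $\bar v$ are linearly independent and both lie in $\ker(W-\lambda)$, so this eigenspace has dimension $\geq 2$: $\lambda$ is a double eigenvalue of $W$, which is the asserted conclusion.

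I expect two points to require the most care. The first is the bookkeeping around the mass normalization: one must verify that $M$ genuinely commutes with $S$ so that the commutation relation passes from $\nabla^2V(c)$ to $W$, and this is exactly where the hypothesis that $\sigma$ preserves the masses is consumed. The second, and more conceptual, is that the conjugation argument yields the double eigenvalue but the plane $\mathrm{span}(v,\bar v)$ need not be invariant under $J$ (equivalently under the rotations $R_\phi$), since $W$ and $J$ do not commute and the three operators $W,J,S$ cannot be simultaneously diagonalized. Thus, to feed the result into the partial-decoupling criterion of Theorem~\ref{thm8}, one still has to resolve the dichotomy described before Theorem~\ref{thm12} — whether the degenerate eigenspace can be chosen $J$-invariant, or whether instead $W$ fails to be diagonalizable with an eigenvector of the form $(w,iw)$ — but the existence of a double eigenvalue with an eigenspace of dimension $\geq 2$, which is all the present statement claims, follows directly from the argument above.
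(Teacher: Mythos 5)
Your proof is correct, and it reaches the conclusion by a noticeably cleaner route than the paper, even though the central object is the same. Both arguments hinge on the operator $S=P^{-1}R_\theta$ (the rotation corrected by the induced mass-preserving permutation), which fixes $c$ and therefore commutes with $W$, and both exploit the tension between the reality of the spectrum of $W$ (from $M$-self-adjointness of $M^{-1}\nabla^2V(c)$ at a real configuration with positive masses) and the non-reality of the spectrum of $S$. Where you diverge is in how the degeneracy is extracted. The paper argues by contradiction: assuming every eigenspace of $W$ is one-dimensional, it deduces that every (real) eigenvector of $W$ is an eigenvector of $R_\theta P$, forces $\mathrm{Sp}(R_\theta P)\subset\{-1,1\}$, and refutes this by computing the \emph{entire} spectrum of $R_\theta P$ from the cycle structure of $P_\sigma$ (the block decomposition into cyclic blocks $T$, the finite order $k\geq 3$, the list of eigenvalues $e^{i(j\pm1)\theta}$). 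You bypass all of that combinatorics by exhibiting the single explicit eigenvector $(\mathbf 1,-i\mathbf 1)$ of $S$ with eigenvalue $e^{i\theta}$, picking an eigenvector $v$ of $W$ inside the $S$-invariant subspace $\ker(S-e^{i\theta})$, and producing the second independent eigenvector as $\bar v$ by complex conjugation, using only $e^{i\theta}\neq e^{-i\theta}$, i.e.\ exactly the hypothesis $\theta\notin\{k\pi\}$; you do not even need $S$ to have finite order. Your approach is direct rather than by contradiction, consumes fewer structural facts about $\sigma$, and makes transparent where each hypothesis (mass-preservation for $[M,S]=0$, reality of $c$ for reality of $\mathrm{Sp}(W)$, $\theta\neq k\pi$ for $v\neq\bar v$) is used; the paper's computation yields the full spectrum of $R_\theta P$, but that extra information is not exploited. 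Your closing caveat is also accurate: the plane $\mathrm{span}(v,\bar v)$ need not be $J$-invariant, so this theorem alone does not yield the partial decoupling of Definition~\ref{def4}, consistent with the dichotomy the paper sets up before Theorem~\ref{thm12}.
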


\begin{proof}
Let $R_{\theta}$ be a rotation such that $\theta\notin \{k\pi ,\; k\in\mathbb{Z} \}$ and that $R_{\theta}$ sends to configuration $c$ on itself and conserve the masses. We pose $W(c)$ as in \eqref{eq2} and we have then the identities
$$W(R_{\theta}c)=R_{-\theta}W(c)R_{\theta}\qquad W(R_{\theta}c)=W(Pc)=P^{-1}W(c)P$$
with $P$ a permutation matrix (the rotation conserve the configuration and the masses of the bodies, but not the numeration of the bodies). Then
$$W(c)=(R_{\theta}P^{-1})^{-1}W(c)R_{\theta}P^{-1}$$
Let $v$ be an eigenvector of $W(c)$. Then $R_{\theta}P^{-1}v$ is also an eigenvector with the same eigenvalue. We just have to prove it is not the same. We can write in a good basis
$$R_{\theta}=\left( \begin {array}{cc} \cos\theta I_n&-\sin\theta I_n\\\sin\theta I_n&\cos\theta I_n\\ \end {array} \right)\qquad 
P=\left( \begin {array}{cc} P_\sigma&0\\0&P_\sigma\\ \end {array} \right)$$
with $P_\sigma$ a permutation matrix. We have then that $P$ and $R_\theta$ commute. We know that the rotation $R_\theta$ is of finite order (because there are only a finite number of bodies and that the configuration is real). Then $\theta=2\pi/k$ with $k\in\mathbb{N}^*$, and $k\geq 3$. The matrix $P$ is then also of order $k$.

Let us consider the body number $i$ with coordinates $q_i$. We look at the orbit $R_\theta^j q_i,\;j=0...k-1$. This orbit contains either $k$ elements or only one (and this case could only happen once, for a body placed on the center of mass). We conclude that the permutation matrix should be of the following form
$$P_\sigma=\left( \begin {array}{cccc} T&0&\dots&0\\0&\dots&0&0\\0&\dots&T&0\\0&\dots&0&1 \end {array} \right)\hbox{ or }P_\sigma=\left( \begin {array}{cccc} T&0&\dots&0\\0&\dots&0&0\\0&\dots&T&0\\0&\dots&0&T \end {array} \right)$$
$$\hbox{with   }T=\left( \begin {array}{cccc} 0&1&\dots&0\\0&0&1&0\\0&\dots&0&1\\1&0&\dots&0 \end {array} \right)$$
We conclude that the matrix $R_\theta P$ can be diagonalized in the form
\begin{align*}
R_\theta P\sim \hbox{diag}\left(e^{i\theta},e^{-i\theta},\left(e^{i(j+1)\theta},\dots,e^{i(j+1)\theta},e^{i(j-1)\theta},\dots,e^{i(j-1)\theta}\right)_{j=0..k-1} \right)\\
\hbox{or   }\quad  R_\theta P\sim \hbox{diag}\left(\left(e^{i(j+1)\theta},\dots,e^{i(j+1)\theta},e^{i(j-1)\theta},\dots,e^{i(j-1)\theta}\right)_{j=0..k-1}\right)
\end{align*}
We suppose that the masses are positive and that the Darboux point is real, then all eigenvectors $v$ of $W(c)$ are real. Suppose that $W(c)$ does not have any eigenspace of dimension $\geq 2$. Then all its eigenvectors are eigenvectors of $R_\theta P$. As $R_\theta P$ is real, if $v$ is a real eigenvector of $R_\theta P$, then the associated eigenvalue is real and so the associated eigenvalue is $\pm 1$. This would mean that
$$Sp(R_\theta P)\subset\{-1,1\}$$
This is impossible because $k\geq 3$.
\end{proof}

\subsection{The equal masses case}

\begin{thm}\label{thm14}
Let $V$ be the potential of the $n$ body problem in the plane with equal masses, $c$ the Darboux point given by the following
$$c_i=\alpha cos\left(\frac{2\pi (i-1)}{n} \right)\qquad c_{i+n}=\alpha sin\left(\frac{2\pi (i-1)}{n} \right)\quad i=1\dots n$$
where $\alpha$ is such that the multiplier equals to $-1$. Let $v$ be the vector given by
$$v_i=\cos\left(\frac{4\pi (i-1)}{n} \right)\qquad v_{i+n}=\sin\left(\frac{4\pi (i-1)}{n} \right)\quad i=1\dots n$$
Then \eqref{eq10} is satisfied with
\begin{equation}\label{eq111}
\lambda=2-\frac{2\sin\left(\frac{\pi}{n} \right)}{1-\cos\left(\frac{\pi}{n} \right)} \left( \sum\limits_{j=1}^{n-1}  \frac{1}{\sin\left(\frac{\pi j}{n}\right)} \right)^{-1}
\end{equation}
\end{thm}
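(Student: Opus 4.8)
The plan is to work in complex coordinates, writing for each body its displacement $w_i=v_i+iv_{i+n}$ and its position $z_i=c_i+ic_{i+n}=\alpha e^{i\theta_i}$ with $\theta_i=2\pi(i-1)/n$, so that every rotation $R_\theta$, and in particular $J=R_{\pi/2}$, acts as multiplication by $e^{i\theta}$ (respectively by $i$) on each $w_i$. Because the masses are equal and $c$ is the regular $n$-gon, the coupling $2\times2$ block of the Hessian between bodies $i$ and $j$ depends only on $i-j\bmod n$, so $W$ is block-circulant. The prescribed vector is exactly the second harmonic $w_i=e^{2i\theta_i}$, and the goal is to show it is a common eigenvector of $W$ and $J^{-1}WJ$ with the eigenvalue \eqref{eq111}, i.e. that it satisfies \eqref{eq10}.

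First I would record the pairwise block. Writing $u_{ij}=q_i-q_j$, $r_{ij}=\lVert u_{ij}\rVert$ and $B_{ij}$ for the Newtonian coupling block, one decomposes each symmetric $2\times2$ block into a $\mathbb{C}$-linear (scalar) part $w\mapsto aw$ and a $\mathbb{C}$-antilinear (traceless) part $w\mapsto b\bar w$; explicitly
\[
B_{ij}:\ w\longmapsto \frac{1}{r_{ij}^3}\Big(-\tfrac12\, w-\tfrac32\, e^{2i\psi_{ij}}\,\bar w\Big),
\]
where $\psi_{ij}$ is the direction of $u_{ij}$. For the $n$-gon a short computation gives $e^{2i\psi_{ij}}=-e^{i(\theta_i+\theta_j)}$ and $r_{ij}=2\alpha\sin(\pi l/n)$ with $l=i-j$. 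The diagonal and off-diagonal blocks of $W$ combine into the graph-Laplacian form $(Wv)_i\leftrightarrow\sum_{j\ne i}B_{ij}(w_j-w_i)$, and since the summand depends only on $l$, the sum over $j$ becomes a sum over $l=1,\dots,n-1$ that is the same for every $i$. (Here the common mass constant coming from $m_im_j$ and the $1/m_i$ in \eqref{eq2} is folded into $\alpha$; it will cancel at the end.)

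The key step is to separate the two parts on $v$. The scalar part yields $e^{2i\theta_i}\sum_{l}\frac{1-e^{-4\pi i l/n}}{16\alpha^3\sin^3(\pi l/n)}$, whose imaginary part cancels under the involution $l\mapsto n-l$, leaving a real multiple of $w_i$; the antilinear part yields $\sum_{l}\frac{3i\sin(2\pi l/n)}{8\alpha^3\sin^3(\pi l/n)}$, which is independent of $i$ and vanishes under the \emph{same} involution (both $\sin(2\pi l/n)$ and $\sin(4\pi l/n)$ are odd while $\sin^3(\pi l/n)$ is even under $l\mapsto n-l$). Hence $Wv=\lambda v$ with $\lambda=\frac{1}{2\alpha^3}\sum_{l=1}^{n-1}\frac{\cos^2(\pi l/n)}{\sin(\pi l/n)}$. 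Applying the identical circulant computation to $\nabla V(c)$ fixes the scale through the multiplier $-1$ condition, giving $\alpha^3=\tfrac14\sum_{l=1}^{n-1}1/\sin(\pi l/n)$. Substituting, and writing $S=\sum_{l}1/\sin(\pi l/n)$, $T=\sum_{l}\sin(\pi l/n)$, the splitting $\cos^2/\sin=1/\sin-\sin$ gives $\lambda=2-2T/S$; the closed forms $T=\cot\frac{\pi}{2n}$ and the half-angle identity $\cot\frac{\pi}{2n}=\sin(\pi/n)/(1-\cos(\pi/n))$ turn this into exactly \eqref{eq111}.

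Finally, the condition $J^{-1}WJv=\lambda v$ in \eqref{eq10} follows at once from the same holomorphic/antiholomorphic splitting: $J$ is multiplication by $i$, so it commutes with the scalar part of $W$ and anticommutes with the antilinear part. Since the antilinear part already annihilates $v$, conjugation by $J$ does not change the action of $W$ on $v$, whence $J^{-1}WJv=Wv=\lambda v$. The main obstacle is exactly the bookkeeping that isolates the antilinear ($\bar w$) part of the Newtonian Hessian and proves it vanishes on the second harmonic via the $l\leftrightarrow n-l$ symmetry; once this splitting is set up, both the eigenvalue and the $J$-invariance are essentially automatic and only the trigonometric simplification remains.
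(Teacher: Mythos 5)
Your proposal is correct, and it does substantially more than the paper, whose entire proof of Theorem~\ref{thm14} is the single sentence that the result follows from ``a direct computation of the matrix $W$ and then of $Wv$ and the use of (lots of) trigonometric formulas.'' Your organization of that computation is sound: the identification $\mathbb{R}^{2n}\cong\mathbb{C}^n$ with $R_\theta$ acting as multiplication by $e^{i\theta}$, the splitting of each Newtonian block into its $\mathbb{C}$-linear part $w\mapsto -\tfrac12 r^{-3}w$ and antilinear part $w\mapsto -\tfrac32 r^{-3}e^{2i\psi}\bar w$, and the identity $e^{2i\psi_{ij}}=-e^{i(\theta_i+\theta_j)}$ for the $n$-gon all check out, as does the cancellation of the antilinear contribution on the second harmonic under $l\mapsto n-l$ (since $\sin(2\pi l/n)$ is odd and $\sin(\pi l/n)$ even under that involution). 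The resulting $\lambda=\tfrac{1}{2\alpha^3}\sum_l \cos^2(\pi l/n)/\sin(\pi l/n)$, the normalization $\alpha^3=\tfrac14\sum_l 1/\sin(\pi l/n)$ from the multiplier condition, and the reductions $T=\cot\tfrac{\pi}{2n}=\sin(\pi/n)/(1-\cos(\pi/n))$ reproduce \eqref{eq111} exactly (the latter identity is the same one the paper invokes in the proof of Theorem~\ref{thm15}). Your final observation that conjugation by $J$ fixes the $\mathbb{C}$-linear part and negates the antilinear part, so that $J^{-1}WJv=Wv$ precisely because the antilinear part annihilates $v$, is a clean way to get both halves of \eqref{eq10} at once rather than verifying $J^{-1}WJv=\lambda v$ by a second computation. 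In short: same statement, but where the paper asserts a brute-force verification, you supply a structured argument that makes the trigonometric cancellations conceptual; the only thing to add would be a line verifying the standard block formula $\nabla^2(1/\lVert u\rVert)=r^{-3}(-I+3uu^{T}/r^{2})$ from which your $B_{ij}$ is read off.
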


\begin{proof}
The proof is only a direct computation of the matrix $W$ and then of $Wv$ and the use of (lots of) trigonometric formulas.
\end{proof}
We can now eventually prove Theorem \ref{thm15}.

\begin{proof}
Using Theorem~\ref{thm5}, one just need to avoid specific values for $\lambda$. We will then build a majoration and minoration for $\lambda$ given by formula \eqref{eq111}. First of all, we remark that for $n\geq 3$
$$\frac{2\sin\left(\frac{\pi}{n} \right)}{1-\cos\left(\frac{\pi}{n} \right)} \left( \sum\limits_{j=1}^{n-1}  \frac{1}{\sin\left(\frac{\pi j}{n}\right)} \right)^{-1} >0$$
Then $\lambda<2$. Let us prove now that $\lambda>0$. First we prove the following inequality
$$\sin(z)< \frac{1}{\sin(z)} \quad \forall z\in]0,\pi/2[\cup]\pi/2,\pi[$$
and we compute the formula
$$\frac{\sin\left(\frac{\pi}{n} \right)}{1-cos\left(\frac{\pi}{n} \right)}=\sum\limits_{j=1}^{n-1}  \sin\left(\frac{\pi j}{n}\right)$$
Using both of them, this gives for $n\geq 3$
$$\frac{\sin\left(\frac{\pi}{n} \right)}{1-\cos\left(\frac{\pi}{n} \right)} < \sum\limits_{j=1}^{n-1}  \frac{1}{\sin\left(\frac{\pi j}{n}\right)}$$
So we get that $\lambda>0$. Using the integrability table of Theorem~\ref{thm5}, there are no exceptional values in $]0,2[$.
\end{proof}

The case $C^2H=0$ is special. In this case, we have either $C=0$ or $H=0$ (or both). The case $H=0$ corresponds to parabolic orbits. These orbits are used by Tsygvintsev in \cite{9}, and he solves the case for $3$ bodies with equal masses (he studies also existence of a single additional first integral that we do not consider). In the case of $C=0$, the problem is solved by Morales, Simon in \cite{8} for the $n$ equal masses. Our reasoning is also valid for all these cases.

\begin{rem}
The only left case is $H=C=0$. Here, the variational equation is always integrable and in fact it is always the case at all orders. This is linked to the fact that we can reduce the system using homogeneity and rotation, allowing to diminish the dimension of $4$. We obtain then a "direction" field (we lose notion of time after reduction, but not the integrability notions) on a manifold of dimension $4n-8$. This, however, destroy Hamiltonian structure, and moreover, the Darboux points correspond now to fixed points of this field. One would need a new particular orbit (explicit) to apply Morales Ramis method, but no such orbit is known.
\end{rem}

I wish to thank Andrzej Maciejewski and Maria Przybylska who, maybe without even noticing it, help me to enhance this work by their remarks, discussions and invitations in Zielona Gora.

\label{}





\bibliographystyle{model1-num-names}
\bibliography{nonzeroangular}







\end{document}